\documentclass{amsart}
\usepackage{amsmath, amsthm, amssymb, enumerate}
\usepackage{pdfpages}
\usepackage{hyperref}

\usepackage{
	%comment,
	%thmtools, % enables listoftheorem and defining theorem
%	enumitem, % enables a greater range of eumerate options
	%pgfplots, % plot generation package
	float, % images option 
	wrapfig,
	color,
	subcaption }
\usepackage{microtype}

\newtheorem{theorem}{Theorem}[section]

\newtheorem{lemma}[theorem]{Lemma}
\newtheorem{corollary}[theorem]{Corollary}
\theoremstyle{definition}
\newtheorem{definition}[theorem]{Definition}
\newtheorem{assumption}[theorem]{Assumption}
\newtheorem{remark}[theorem]{Remark}
\newtheorem{example}[theorem]{Example}

\numberwithin{equation}{section}

\newcommand{\Comment}[1]{}
%Change notation here:
\usepackage{ifthen} % provides \ifthenelse test  
\usepackage{xifthen} % provides \isempty test
\usepackage{twoopt}

%\newcommandtwoopt\mucausal[2][][] {
%\ifthenelse{\isempty{#1}}
%	{\ifthenelse{\isempty{#2}} {\mu^N_{\square}} {\mu^{N,{#2}}_{\square}}}
%	{\ifthenelse{\isempty{#2}} {\mu^N_{\square,{#1}}} {\mu^{N, {#2}}_{\square 		{,#1}}}}
%}

\newcommandtwoopt\mucausal[2][][] {
\ifthenelse{\isempty{#1}}
	{\ifthenelse{\isempty{#2}} {\widehat{\boldsymbol{\mu}}^N} {\widehat{\boldsymbol{\mu}}^{N,{#2}}}}
	{\ifthenelse{\isempty{#2}} {\widehat{\boldsymbol{\mu}}^N_{{#1}}} {\widehat{\boldsymbol{\mu}}^{N, {#2}}_{{,#1}}}}
}

\newcommandtwoopt\nucausal[2][][] {
\ifthenelse{\isempty{#1}}
	{\ifthenelse{\isempty{#2}} {\widehat{\boldsymbol{\nu}}^N} {\widehat{\boldsymbol{\nu}}^{N,{#2}}}}
	{\ifthenelse{\isempty{#2}} {\widehat{\boldsymbol{\nu}}^N_{{#1}}} {\widehat{\boldsymbol{\nu}}^{N, {#2}}_{{,#1}}}}
}

%\newcommandtwoopt\mucausal[2][][] {
%\ifthenelse{\isempty{#1}}
%	{\ifthenelse{\isempty{#2}} {\widehat{\boldsymbold{\mu}}^N} {\mu^{N,{#2}}_{\square}}}
%	{\ifthenelse{\isempty{#2}} {\mu^N_{\square,{#1}}} {\mu^{N, {#2}}_{\square 		{,#1}}}}
%}

\newcommand\causal[2][]{
\ifthenelse{\isempty{#2}}  {{#1}^N_{\square} }
{{#1}^N_{\square {,#2}}} }

\newcommandtwoopt\muempirical[2][][]{
\ifthenelse{\isempty{#1}}
	{\ifthenelse{\isempty{#2}} {\widehat{\mu}^N} {\hat{\mu}^{N,{#2}}}}
	{\ifthenelse{\isempty{#2}} {\widehat{\mu}^N_{{#1}}} {\widehat{\mu}^{N, 					{#2}}_{{#1}}}}
}

\title{Estimating processes in adapted Wasserstein distance}
\author[J.\ Backhoff, D.\ Bartl, M.\ Beiglb\"ock, J.\ Wiesel]{Julio Backhoff, Daniel Bartl, Mathias Beiglb\"ock, Johannes Wiesel}
\thanks{}	
\address{University of Twente, Department of Applied Mathematics}
\email{julio.backhoff@utwente.nl}
\address{Vienna university, Department of Mathematics}
\email{daniel.bartl@univie.ac.at}
\address{Vienna university, Department of Mathematics}
\email{mathias.beiglboeck@univie.ac.at}
\address{Columiba University, Department of Statistics}
\email{johannes.wiesel@columbia.edu }
\keywords{nested distance, adapted Wasserstein distance, causal transport, empirical measure, consistency}
\date{\today}
\subjclass[2010]{}

\begin{document}
\begin{abstract}
A number of researchers have independently introduced topologies on the set of laws of stochastic processes that extend the usual weak topology. Depending on the respective scientific background this was motivated by applications and connections to various areas (e.g.\ Plug--Pichler - stochastic programming, Hellwig - game theory, Aldous - stability of optimal stopping, Hoover--Keisler - model theory).  Remarkably, all these seemingly independent approaches define the same \emph{adapted weak topology} in finite discrete time. Our first main result is to construct an \emph{adapted} variant of the empirical measure that consistently estimates the laws of stochastic processes in full generality. 

A natural compatible metric for the adapted weak topology is the given by an adapted refinement of the Wasserstein distance, as established in the seminal works of Pflug-Pichler. Specifically,  the adapted Wasserstein distance allows to control the error in stochastic optimization problems, pricing and hedging problems, optimal stopping problems, etc.\ in a Lipschitz fashion. 
The second main result of this article yields quantitative bounds for the convergence of the adapted empirical measure with respect to adapted Wasserstein distance. Surprisingly, we obtain virtually the same optimal rates and concentration results that are known for the classical empirical measure wrt.\ Wasserstein distance.

\medskip

\noindent\emph{Keywords:} empirical measure, Wasserstein distance, nested distance, adapted weak topology. \\
{\color{black}\emph{Mathematics Subject Classification (2010):} 60G42, 90C46, 58E30.}
\end{abstract}

\maketitle

\section{Introduction}
\label{sec:introduction}

For a Polish space $(\mathcal{X}, d)$, the (first order) Wasserstein distance on the set of Borel probabilities $\mathrm{Prob}(\mathcal{X})$, is defined by
\begin{align*}
\mathcal{W}(\mu,\nu):=\inf_{\pi\in\mathrm{Cpl}(\mu,\nu)}\int d(x,y)\,\pi(dx,dy).
\end{align*}
Here $\mathrm{Cpl}(\mu,\nu)$ is the set of couplings between $\mu$ and $\nu$, that is, probabilities $\pi\in\mathrm{Prob}(\mathcal{X}\times\mathcal{X})$ with first marginal $\mu$ and second marginal $\nu$.
The Wasserstein distance is particularly well suited for many stochastic problems involing laws of \emph{random variables}. Accordingly, studying convergence of empirical measures in Wasserstein distance has a long history; we refer to \cite{fournier2015rate} for results and review of the literature.

The situation drastically changes if, instead of random variables, one is interested in laws of \emph{stochastic processes}.
Consider the case of  two timepoints, let $\mathcal{X}=[0,1]\times[0,1]$ and consider the probabilities $\mu=\delta_{(1/2,1)}+\delta_{(1/2,0)}$ and $\nu:=\delta_{(1/2+\varepsilon,1)}+\delta_{(1/2-\varepsilon,0)}$.
Then the discrepancy of $\mu$ and $\nu$ in Wasserstein distance is of order $\varepsilon$, while, considered as laws of stochastic processes, $\mu$ and $\nu$ have very different properties.
 \vspace{-15mm}
 \begin{figure}[H]\label{TraditionalPicture}
    \centering
    \includegraphics[page=1,width=0.45\textwidth]
        {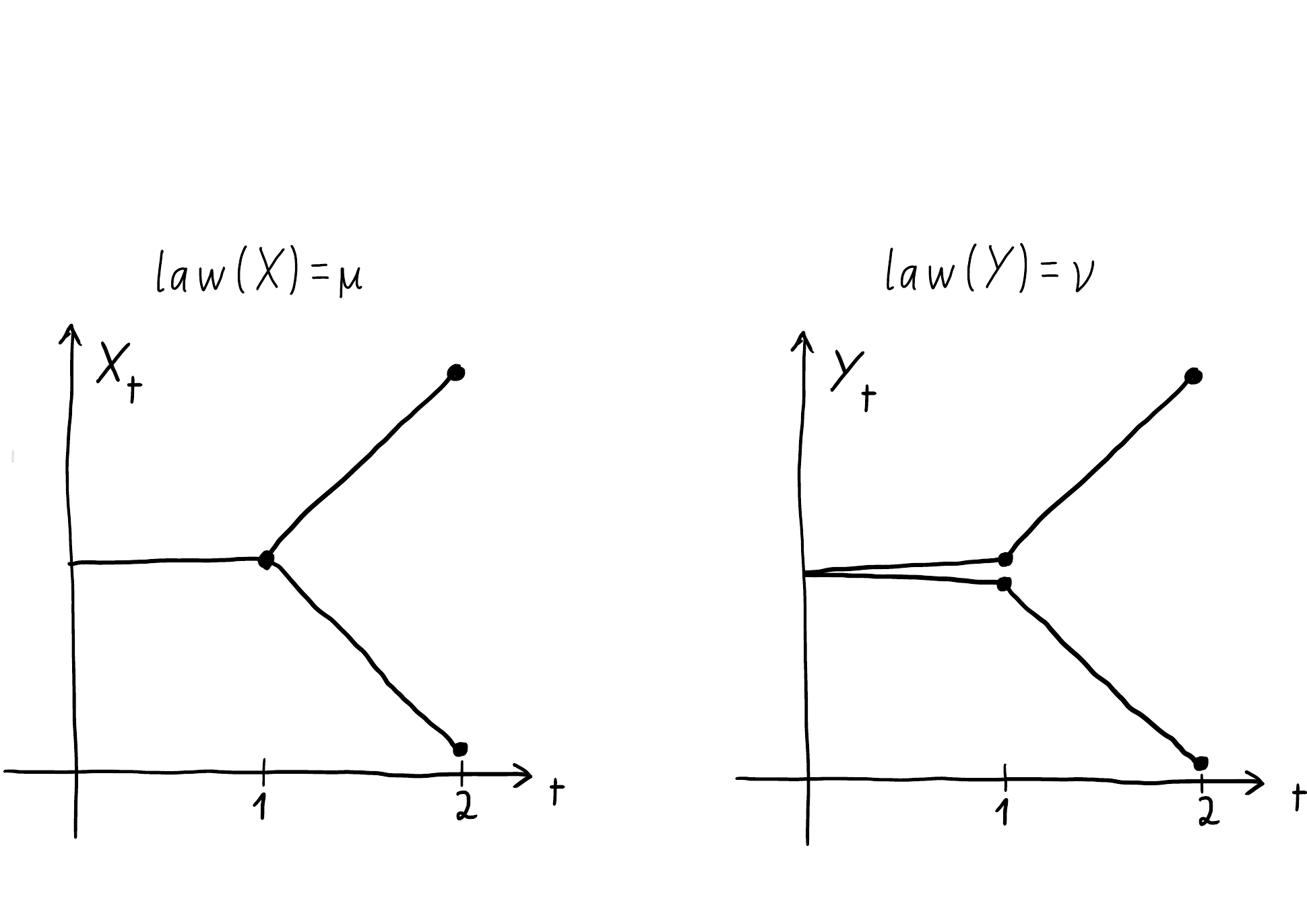} 
  \caption{Close in Wasserstein, very different as stochastic processes.}
 \label{fig:usual.wasserstein}
 \end{figure}
 \vspace{-3mm}
For instance, while no information is available at time $t=1$ in case of $\mu$, the whole future evolution of $\nu$ is known at time $t=1$ already.
In fact, the law of an arbitrary stochastic process can be approximated in classical weak topology  by laws of stochastic processes which are deterministic after the first period.

As already mentioned in the abstract, to overcome this flaw of the Wasserstein distance (or rather, the weak topology), several researchers have introduced adapted versions of the weak topology.
Reassuringly, all these seemingly different definitions  yield the same topology in finite discrete time, see \cite{BaBaBeEd19b}.

 Below we present an  \emph{adapted} extension of the classical Wasserstein distance which induces this topology.
In analogy to its classical counterpart, it turns out to be  particularly well suited to obtain a quantitative control of stochastic optimization problems, see e.g.\ \cite{glanzer2019incorporating, BaBaBeEd19a}.

\subsection{Causality and adapted / nested Wasserstein distance}
Fix $d \in \mathbb{N}$, which we interpret as the dimension of the state space, denote $T\geq 2$ the number of time points under consideration, and let $\mu, \nu$ be  Borel probability measures on $\mathcal{X}=([0,1]^d)^T$. % (we write $\mu\in\mathrm{Prob}(([0,1]^d)^T)$).  
In order to account for the temporal structure of stochastic processes, it is necessary to restrict to couplings of probability measures that are non-anticipative in a specific sense: 

Write $X=(X_1, \ldots, X_T)$, $Y=(Y_1, \ldots, Y_T)$ for the projections $X, Y\colon \mathcal{X} \times \mathcal X \to \mathcal X$ onto the first respectively the second coordinate. 
A coupling $\pi\in \mathrm{Cpl}(\mu, \nu)$ is called \emph{causal} (in the language of Lassalle \cite{La18a}) if for all $t< T$ the following holds: 
\begin{align}\label{CausalityDef}
\text{given } (X_1, \ldots, X_t), \quad Y_t \text{ and } (X_{t+1}, \ldots, X_T) \text{ are $\pi$-independent.}
\end{align}
 That is to say, in order to predict $Y_t$, the only information relevant in $X_1,\dots,X_T$ is already contained in $X_1,\dots,X_t$. 
 
The concept of causal couplings  is a suitable extension of   adapted processes: a process $Z=(Z_1, \ldots, Z_T)$ on $\mathcal X$ is adapted with respect to the natural filtration if each $Z_t$ depends only on the values of $X_1, \ldots, X_t$ (so in particular is conditionally independent of $X_{t+1}, \ldots, X_T$ given $X_1,\dots,X_t$). Property \eqref{CausalityDef} represents a counterpart of adaptedness on the level of couplings rather than processes. 

Similarly we call a coupling \emph{anti-causal} if it satisfies \eqref{CausalityDef} with the roles of $X$ and $Y$ interchanged and finally we call $\pi$ \emph{bi-causal} if it is causal as well as anti-causal. We denote the set of bi-causal couplings with marginals $\mu, \nu$ by $\mathrm{Cpl}_{\mathrm{bc}}(\mu, \nu)$.

\begin{definition}[Adapted Wasserstein distance / nested distance]
	The adapted Wasserstein distance (or nested distance) $\mathcal{AW}$ on $\mathrm{Prob}(([0,1]^d)^T)$ is defined as 
	\begin{align}\label{AWdef}
	\mathcal{AW}(\mu,\nu) := \inf_{\pi\in\mathrm{Cpl}_{\mathrm{bc}}(\mu,\nu)}\int \sum_{t=1}^T|x_t-y_t|\,\pi(dx,dy).
	\end{align}
\end{definition}

Bi-causal couplings and the corresponding transport problem were considered by R\"uschendorf \cite{Ru85} under the name `Markov-constructions'. Independently, the concept was introduced by Pflug-Pichler \cite{PfPi12} who realized the full potential of the modified Wasserstein distance in the context of stochastic multistage optimization problems, see also \cite{PfPi14, PfPi15, PfPi16, glanzer2019incorporating}. Pflug-Picher refer to \eqref{AWdef} as \emph{process distance} or \emph{nested distance}. The latter name is motivated by an alternative representation of \eqref{AWdef} through a  dynamic programming principle. For notational simplicity we state it here only for the case $T=2$ where one obtains the representation
\begin{align}
\label{AWdef2} 
\mathcal{AW}(\mu,\nu)
=\inf_{\gamma\in\mathrm{Cpl}(\mu_1,\nu_1) } \int |x_1-y_1| + \mathcal{W}(\mu_{x_1},\nu_{y_1}) \,\gamma(dx_1,dy_1).
\end{align}
Here (and in the rest of this article), for $\mu\in\mathrm{Prob}(([0,1]^d)^T)$ and $1\leq t\leq T-1$, we denote by $\mu_1$ the first marginal of $\mu$ and by $\mu_{x_1,\dots,x_t}$ the disintegration of $\mu$, that is,
\[ \mu_1(\cdot)=P[X_1\in \, \cdot \, ]
\quad\text{and}\quad 
\mu_{x_1,\dots,x_t}(\cdot):=P[ X_{t+1} \in \, \cdot \,| X_1=x_1,\dots,X_t=x_t]\]
for all $(x_1,\dots,x_t)\in([0,1]^d)^t$, where $X$ is a process with law $\mu$.
Informally, the representation in \eqref{AWdef2} asserts that two probabilities are close in adapted Wasserstein distance if  (and only if) besides their marginals, also their kernels  are similar. This is exactly what fails in the example presented in Figure \ref{fig:usual.wasserstein}.

\subsection{Main results}

Let $\mu$ be a Borel probability measure on $([0,1]^d)^T$ 
capturing the true dynamics of the process under consideration. 
Furthermore let $(X^n)_{n\in\mathbb{N}}$ be an i.i.d.\ sample of $\mu$, defined on some fixed (sufficiently rich) abstract  probability space $(\Omega,\mathcal{F},P)$, i.e., each $X^n=(X_1^n,\dots,X_T^n)$ is distributed according to $\mu$.

\begin{definition}[Adapted empirical measure]
\label{def:adapted.empirical.measure}
	Set $r=(T+1)^{-1}$ for $d=1$ and $r=(dT)^{-1}$ for $d\geq 2$.
	For all $N\geq 1$, \emph{partition} the cube $[0,1]^d$ into the \emph{disjoint union} of $N^{rd}$ cubes with edges of length $N^{-r}$ and let $\varphi^N\colon[0,1]^d\to[0,1]^d$ map each such small cube to its center.
	Then define
	\[ \mucausal:=\frac{1}{N}\sum_{n=1}^N \delta_{\varphi^N(X_1^n),\dots,\varphi^N(X_T^n)}. \]
	for each $N\geq 1$. 
	We call $\mucausal$ the {\emph {adapted empirical measure}}.
\end{definition}
 \vspace{-5mm}
 \begin{figure}[H]\label{TraditionalPicture}
    \centering
    \includegraphics[page=1,width=0.75\textwidth]
        {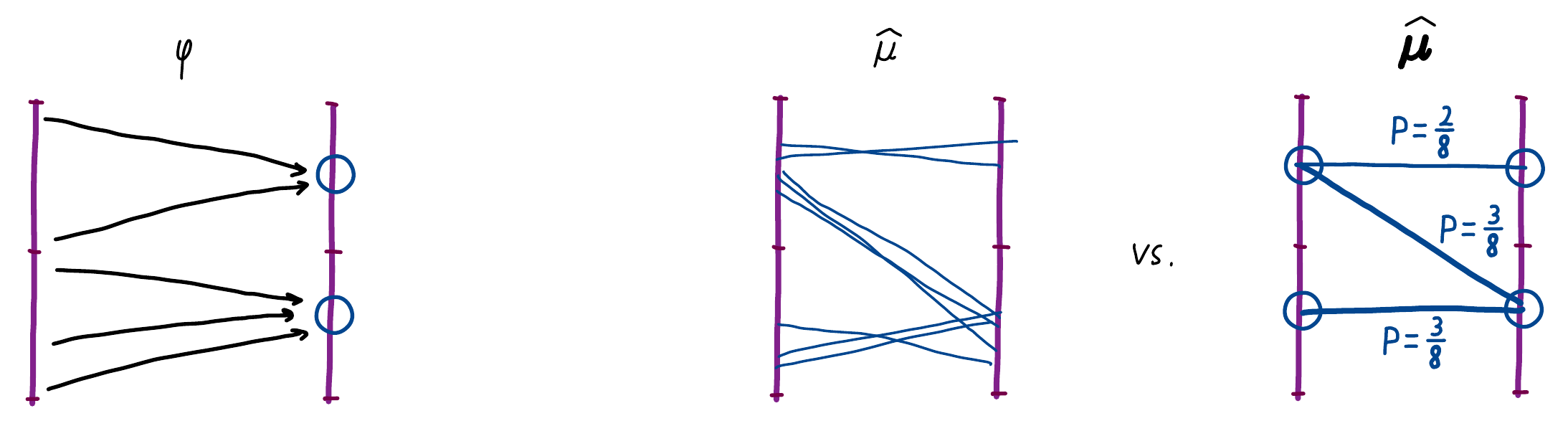} 
  \caption{The map $\varphi$ and comparison of empirical vs.\ adapted empirical measure for $d=1, T=2, N=8, r=1/3$.}
 \label{fig:empirical}
 \end{figure}
 \vspace{-3mm}

That is, the function $\varphi^N$ satisfies $\sup_{u\in[0,1]^d} |u-\varphi^N(u)|\leq CN^{-r}$ and its range $\varphi^N([0,1]^d)$ consist of $N^{rd}$ points.
If $\varphi^N$ were the identity, then $\mucausal$ would be the \emph{(classical) empirical measure}, which we denote by $\muempirical$. 

It was first noted by Pflug-Pichler in \cite{PfPi16} that, in contrast the classical Wasserstein distance,  $\mathcal{AW}(\mu,\muempirical)$ does \emph{not} tend to 0 for generic choices of $\mu$ (cf.\ Remark \ref{rem:classical.empirical.doesnt.converge} below). For similar reasons, an optimal stopping problem solved under $\muempirical$ as a reference model, need not converge a.s.\ to the same problem under model $\mu$ (cf.\ Example \ref{ex:opt.stop.not.cont.usual.empirical}).

Our first main Theorem is the following consistency result for the adapted empirical measure:

\begin{theorem}[Almost sure convergence]
\Comment{For this result we could also replace $N^{rd}$ by any sublinear function, should we make a remark somewhere? MB: It think we should indeed make such a remark, maybe at the end of the introduction? We should also stress that our empirical measure is ``invariant'' under permutation of the time-indices and hence allows for consistent estimation in the `back to the future' topology.}
\label{thm:almost.sure.convergence}
	The adapted empirical measures is a strongly consistent estimator, that is, 
%	\[ \mathcal{AW}(\mu,\mucausal)\to 0 \quad\text{as }N\to\infty\]
	\[  \lim_{N\to\infty} \mathcal{AW}(\mu,\mucausal) = 0 \]	
	$P$-almost surely.
\end{theorem}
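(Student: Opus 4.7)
My plan is to introduce the auxiliary deterministic law $\mu^{\varphi^N}\in\mathrm{Prob}(([0,1]^d)^T)$ defined as the push-forward of $\mu$ under the coordinatewise quantization $(x_1,\dots,x_T)\mapsto(\varphi^N(x_1),\dots,\varphi^N(x_T))$, and split the error via the triangle inequality for $\mathcal{AW}$:
\[ \mathcal{AW}(\mu,\mucausal)\;\leq\; \mathcal{AW}(\mu,\mu^{\varphi^N})+\mathcal{AW}(\mu^{\varphi^N},\mucausal). \]
The first summand is a purely deterministic quantization bias that I would show tends to $0$; the second is the stochastic fluctuation of an $N$-sample empirical measure on the finite grid $\varphi^N([0,1]^d)^T$, which I would show tends to $0$ $P$-a.s.\ by combining concentration with a Borel--Cantelli argument.

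For the quantization bias I would iterate the nested representation \eqref{AWdef2} and build a bi-causal Knothe--Rosenblatt coupling of $\mu$ and $\mu^{\varphi^N}$: at level $1$ take $\gamma_1=(\mathrm{Id},\varphi^N)_*\mu_1$, and given a matched history $(x_{1:t},y_{1:t})$ optimally $\mathcal{W}$-couple $\mu_{x_{1:t}}$ with $(\mu^{\varphi^N})_{y_{1:t}}$ at level $t+1$. The "displacement" contributions are bounded levelwise by $CN^{-r}$, which sum to $CTN^{-r}\to 0$. For the "kernel" contributions observe that $(\mu^{\varphi^N})_{y_{1:t}}=\bar\mu_{y_{1:t}}\circ(\varphi^N)^{-1}$, where $\bar\mu_{y_{1:t}}$ is the $\mu$-conditional expectation of the kernel $\mu_{x_{1:t}}$ over the cell $\{\varphi^N(x_{1:t})=y_{1:t}\}$. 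A measure-valued Lebesgue differentiation / Doob martingale-convergence argument along the refining sequence of $\varphi^N$-partitions should give $\bar\mu_{\varphi^N(x_{1:t})}\to\mu_{x_{1:t}}$ in $\mathcal{W}$ for $\mu$-a.e.\ $x_{1:t}$; a dominated convergence step then closes the bias argument.

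For the empirical fluctuation I would again unfold the nested representation level by level. Conditioning on any fixed quantized history $y_{1:t}$, comparing $\mu^{\varphi^N}$ and $\mucausal$ reduces to comparing a target probability and a classical empirical measure on the $N^{rd}$-point set $\varphi^N([0,1]^d)$, built from roughly $N\cdot\mu^{\varphi^N}_1(\{y_{1:t}\})$ samples. On "heavy" histories, standard Bernstein/Hoeffding concentration for discrete empirical measures yields a power-law bound in $N$; "light" histories contribute at most their aggregate mass, which is small. Since the total number of histories across the $T$ time steps is only polynomial in $N$, a union bound produces a tail probability that is summable, and Borel--Cantelli lifts the resulting in-probability convergence to $P$-almost sure convergence. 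I expect the main obstacle to be precisely this decomposition: one must tune the heavy/light threshold so that the concentration error on heavy cells and the neglected mass on light cells balance at every one of the $T$ nested levels. The particular choice $r=(dT)^{-1}$ (respectively $(T+1)^{-1}$ when $d=1$) is exactly calibrated to make the discretization bias $N^{-r}$ and the per-cell empirical rate $\sim (N^{rd}/N)^{1/2}$ simultaneously vanish.
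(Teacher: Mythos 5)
Your decomposition $\mathcal{AW}(\mu,\mucausal)\le\mathcal{AW}(\mu,\mu^{\varphi^N})+\mathcal{AW}(\mu^{\varphi^N},\mucausal)$ is natural, and for $T=2$ the bias term can indeed be handled as you describe: taking $\gamma_1=(\mathrm{Id},\varphi^N)_*\mu_1$ keeps $y_1=\varphi^N(x_1)$ pinned, so the kernel mismatch $\mathcal{W}(\mu_{x_1},(\mu^{\varphi^N})_{\varphi^N(x_1)})\le\mathcal{W}(\mu_{x_1},\mu_{G(x_1)})+CN^{-r}$ is exactly a Lebesgue-point quantity, and dominated convergence finishes. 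However, the argument breaks at $T\ge 3$, and that is where the real work lies.

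The gap is in your ``matched history'' bookkeeping. In the nested representation, once you optimally $\mathcal{W}$-couple $\mu_{x_1}$ with $(\mu^{\varphi^N})_{y_1}$ at level~$2$, the resulting $y_2$ is a generic grid point, \emph{not} $\varphi^N(x_2)$, and the mass near cell boundaries can land $y_2$ in a cell that does not contain $x_2$. At level $3$ you must then estimate $\mathcal{W}(\mu_{x_{1:2}},(\mu^{\varphi^N})_{y_{1:2}})$ for $y_{1:2}\neq\varphi^N(x_{1:2})$, and for a \emph{general} disintegration there is no modulus of continuity to convert proximity of $x_{1:2}$ and $y_{1:2}$ into proximity of $\mu_{x_{1:2}}$ and $\mu_{G(y_{1:2})}$. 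Your Lebesgue-differentiation step only controls $\mathcal{W}(\mu_{x_{1:t}},\mu_{G(x_{1:t})})$, i.e.\ precisely the case $y_{1:t}=\varphi^N(x_{1:t})$, which the bi-causal dynamics do not preserve. (One cannot fix this by instead using the deterministic component-wise coupling $(\mathrm{Id},\Phi)_*\mu$: that map has the right marginals but is not \emph{anti}-causal, since conditionally on the quantized history $\Phi(X_{1:t})$, the random variables $X_t$ and $X_{t+1}$ are in general not independent.) The same divergent-history phenomenon also affects your second term $\mathcal{AW}(\mu^{\varphi^N},\mucausal)$, since both measures have grid-valued but generally mismatched histories at levels $t\ge 2$. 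Finally, a minor but real technical point: the grids $\Phi^N$ for different $N$ are not nested (edge lengths $N^{-r}$, not dyadic), so neither Doob martingale convergence nor the Besicovitch cube-differentiation theorem applies off the shelf to a general marginal $\mu_{1:t}$; this would need a separate argument.

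The paper sidesteps exactly this obstacle: rather than quantizing $\mu$ directly, it uses Lusin's theorem together with Dugundji's extension to replace $\mu$ by a measure $\nu$ with \emph{continuous} kernels and the same first marginal, so that the identity coupling gives $\mathcal{AW}(\mu,\nu)\le\varepsilon$ without any nested recursion. Continuity supplies the uniform modulus $\mathcal{W}(\mu_{x_{1:t}},\nu_{y_{1:t}})\le\delta+C(\delta)|x_{1:t}-y_{1:t}|+\mathcal{W}(\mu_{y_{1:t}},\nu_{y_{1:t}})$ needed to re-center the diverging histories in the dynamic programming recursion (Lemma~\ref{lem:aw.estimate.continuous}), after which the subgaussian concentration and Borel--Cantelli argument that you sketch for the fluctuation term goes through. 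The comparison of $\mucausal$ and $\nucausal$ is then done with an explicit coupling built from the same sample paths, which yields $\limsup_N\mathcal{AW}(\mucausal,\nucausal)\le\varepsilon$ almost surely. In short, what is missing from your plan is a mechanism to re-center the nested recursion for $T\ge 3$, and this is precisely what the regularity coming from the Lusin approximation provides.
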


In particular, as $\mathcal{W}\leq \mathcal{AW}$ by definition, it follows that the adapted empirical measure converges in the usual weak topology as well. 

In order to quantify the speed of convergence, we assume the following regularity property for the remainder of this section.

\begin{assumption}[Lipschitz kernels]
\label{ass:lipschitz.kernel}
	There is a version of the ($\mu$-a.s.\ uniquely defined) disintegration such that for every $1\leq t\leq T-1$ the mapping
	\begin{align*}
	([0,1]^d)^t 
	\ni (x_1, \dots, x_t) 
	\mapsto \mu_{x_1,\dots,x_t}
	\in \mathrm{Prob}([0,1]^d)
	\end{align*}
	is Lipschitz continuous, where $\mathrm{Prob}([0,1]^d)$ is endowed with its usual Wasserstein distance $\mathcal{W}$.
\end{assumption} 

\begin{theorem}[Average rate]
\label{thm:rates.unit.cube}
	Under Assumption \ref{ass:lipschitz.kernel}, there is a constant $C>0$ such that
	\begin{align}
	\label{eq:mean.speed.rate}
	\begin{split}
	E\Big[ \mathcal{AW}(\mu,\mucausal)\Big]
	&\leq C \cdot
	\begin{cases}
	N^{-1/(T+1)} &\text{for } d=1,\\
	N^{-1/(2T)}\log(N+1) &\text{for } d=2,\\
	N^{-1/(dT)} &\text{for } d\geq 3,
	\end{cases} \\
	&=:C \cdot	\mathop{\mathrm{rate}}(N) 
	\end{split}
	\end{align}
	for all $N\geq 1$.
\end{theorem}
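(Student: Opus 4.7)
I would build an explicit bi-causal coupling step by step, reducing the problem to a sum of one-step conditional Wasserstein errors. Writing $\mu^{t}_{x_{1:t-1}}$ for the conditional law of $X_t$ given $X_{1:t-1}=x_{1:t-1}$ under $\mu$, and $\hat\kappa^{t}_{y_{1:t-1}}$ for the analogous conditional of $\mucausal$, the iterated nested representation of $\mathcal{AW}$ (the $T$-step extension of \eqref{AWdef2}) gives
\[
\mathcal{AW}(\mu,\mucausal)\;\leq\;\sum_{t=1}^{T} E_{\pi}\!\left[\mathcal{W}\!\left(\mu^{t}_{X_{1:t-1}},\,\hat\kappa^{t}_{Y_{1:t-1}}\right)\right]
\]
for any bi-causal $\pi$ obtained by composing optimal one-step Wasserstein couplings given the past.

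I would build $\pi$ by \emph{cell synchronization}: at each time $t$, for each quantization cell $B$ of side $N^{-r}$ with center $c_B$, match the overlapping mass $\min(\mu^{t}_\cdot(B),\hat\kappa^{t}_\cdot(c_B))$ diagonally (sending $x\in B$ to $c_B$), and transport any excess between distinct cells at cost $O(N^{-r})$. Along the synchronized branch, $Y_{1:t-1}=(\varphi^N(X_1),\dots,\varphi^N(X_{t-1}))$, and $\hat\kappa^{t}_{Y_{1:t-1}}$ is the quantized empirical of the $N_B$ samples whose quantized past matches $Y_{1:t-1}$, where $B$ is the corresponding time-$(t-1)$ path cell. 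The triangle inequality then splits each summand into three parts:
\[
\mathcal{W}\!\left(\mu^{t}_{X_{1:t-1}},\hat\kappa^{t}_{Y_{1:t-1}}\right) \leq L(t-1)N^{-r}+CN^{-r}+\mathcal{W}\!\left(\bar\mu^{t}_B,\text{empirical of }N_B\text{ samples}\right),
\]
the first arising from the iterated Lipschitz kernel assumption (passing from the specific point $X_{1:t-1}$ to the cell-averaged conditional $\bar\mu^{t}_B$), the second from quantizing the time-$t$ coordinate, and the third a classical unquantized $d$-dimensional Wasserstein error.

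To conclude, take expectations and sum over $t$. The Fournier--Guillin bound gives $E[\mathcal{W}(\bar\mu^t_B,\text{empirical})]\lesssim N_B^{-1/d}$ (with an extra logarithm for $d=2$, and $N_B^{-1/2}$ for $d=1$). Combining this with Chernoff concentration $N_B\approx N\mu_{1:t-1}(B)$ and the cell count $N^{(t-1)/T}$ for $d\geq 2$ (respectively $N^{(t-1)/(T+1)}$ for $d=1$), Hölder's inequality $\sum_B\mu_{1:t-1}(B)^{1-1/d}\leq(\#B)^{1/d}$ produces a per-step contribution of order $\mathrm{rate}(N)$, and summing over $t$ yields the claim. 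The exponent $r$ is chosen precisely to balance the $N^{-r}$ quantization cost against the typical per-cell classical rate. The main technical obstacle is controlling rare cells with very few samples, where $N_B^{-1/d}$ blows up in expectation; this is handled by using the crude bound $\mathcal{W}\leq 1$ on the atypical event $\{N_B\ll N\mu_{1:t-1}(B)\}$ together with a Chernoff tail estimate, whose contribution is absorbed into the target rate.
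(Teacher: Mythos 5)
Your overall architecture matches the paper's: reduce $\mathcal{AW}$ to a sum of one-step conditional Wasserstein errors via the Lipschitz assumption, recognize that within each path-cell $G$ the estimator is (up to $O(N^{-r})$ quantization) a classical empirical measure of the averaged kernel $\mu_G$, invoke Fournier--Guillin, and balance the quantization cost $N^{-r}$ against the per-cell statistical rate to pin down $r$. However, two steps in your plan need repair. First, the ``cell synchronization'' coupling is not sound as described: the excess mass (the part of $\min(\mu^t_\cdot(B),\hat\kappa^t_\cdot(c_B))$ that does not match) must be shipped between distant cells at cost up to $\mathrm{diam}([0,1]^d)$, not $O(N^{-r})$, and once two paths desynchronize the Lipschitz bound $L(t-1)N^{-r}$ on $\mathcal{W}(\mu^t_{X_{1:t-1}},\bar\mu^t_B)$ no longer applies. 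The paper sidesteps this entirely via Lemma~\ref{lem:aw.estimate.lipschitz.kernel}: a backward induction on the dynamic-programming value function uses $\mathcal{W}(\mu_{x_{1:t}},\nu_{y_{1:t}})\le L|x_{1:t}-y_{1:t}|+\mathcal{W}(\mu_{y_{1:t}},\nu_{y_{1:t}})$ to \emph{absorb} the mismatch $|x-y|$ into the transport cost itself, yielding the clean bound $\mathcal{AW}(\mu,\nu)\le C\mathcal{W}(\mu_1,\nu_1)+C\sum_t\int\mathcal{W}(\mu_{y_{1:t}},\nu_{y_{1:t}})\,\nu(dy)$ in which everything is evaluated at $y$ and integrated against $\nu=\mucausal$ only --- no coupling and no synchronization event to control.

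Second, for the summation over cells you propose Chernoff concentration $N_B\approx N\mu(B)$, a H\"older step, and a separate crude-bound argument for rare cells. This can be made to work, but it is unnecessarily heavy: the paper conditions on the exact cell counts $\mathcal{G}_t^N=(\muempirical(G))_G$ (after showing in Lemma~\ref{lem:ingredients.indep} that conditionally the cells are independent classical empirical measures with random sizes $N\muempirical(G)$), so one needs only bound the \emph{deterministic} quantity $\sum_G \muempirical(G)\,R(N\muempirical(G))$, which by concavity of $u\mapsto uR(u)$ and Jensen is $\le R(N/|\Phi_t^N|)$ with no concentration estimate and no case distinction for sparsely populated cells. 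Your H\"older inequality $\sum_B \mu(B)^{1-1/d}\le(\#B)^{1/d}$ is the same concavity fact, but applying it directly to the random counts (rather than the population masses) eliminates the Chernoff step and the rare-cell bookkeeping entirely. So: right skeleton, right rate arithmetic, but the coupling construction as stated has a gap, and the rare-cell handling you invoke is a complication the paper's conditioning-plus-Jensen argument avoids.
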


In the theorem above, the constant $C$ depends on $d$, $T$, and the Lipschitz-constants in Assumption \ref{ass:lipschitz.kernel}. 

\begin{remark}
Let us quickly compare this result with its counterpart for the classical Wasserstein distance; we refer to \cite{fournier2015rate} for  general results and  background on the problem:
Ignoring the temporal structure and viewing $\mu$ as the law of a random variable on $[0,1]^{dT}$, one has 
	\[	E\Big[ \mathcal{W}(\mu,\muempirical)\Big]
	\leq C 
	\begin{cases}
	N^{-1/2} &\text{for } dT=1,\\
	N^{-1/2}\log(N+1) &\text{for } dT=2,\\
	N^{-1/(dT)} &\text{for } dT \geq 3,
	\end{cases} \]
for all $N\geq 1$, and these rates are known to be sharp.
As a consequence, for $d\geq 3$ the adapted empirical measure converges in adapted Wasserstein distance at optimal rates. 
For $d=2$ the rates are optimal up to a logarithmic factor and for $d=1$ the rate is (possibly) not optimal, but approaches the optimal one for large $T$.
\end{remark}

Our final main result is the following concentration inequality:
%denote by $\mathop{\mathrm{rate}}(N)$ the rate for the mean speed of convergence, namely the right hand side of \eqref{eq:mean.speed.rate}.

\begin{theorem}[Deviation]
\label{thm:deviation}
	Under Assumption \ref{ass:lipschitz.kernel}, there are constants $c,C>0$ such that 
	\[ P\Big[ \mathcal{AW}(\mu,\mucausal)  \geq C\mathop{\mathrm{rate}}(N)+\varepsilon \Big]
	\leq 2T\exp\Big( -cN\varepsilon^2 \Big) \]
	for all $N\geq 1$ and all $\varepsilon>0$.
\end{theorem}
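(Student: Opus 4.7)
The plan is to derive the concentration bound by combining the mean estimate already established in Theorem \ref{thm:rates.unit.cube} with a bounded-differences style concentration inequality. Writing $F(X^1,\dots,X^N) := \mathcal{AW}(\mu,\mucausal)$, the task reduces to showing that $F$ concentrates around its mean at rate $\exp(-cN\varepsilon^2)$ (up to the prefactor $T$), so that combining
\[ F \leq E[F] + (F-E[F]) \quad\text{with}\quad E[F] \leq C\mathop{\mathrm{rate}}(N) \]
produces the desired tail estimate.

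\textbf{Step 1 (sensitivity).} Let $\mucausal[(i)]$ denote the adapted empirical measure obtained by replacing the $i$-th sample $X^i$ with an independent copy $\widetilde X^i$. Using the triangle inequality for $\mathcal{AW}$ (which holds because bi-causal couplings glue to bi-causal couplings),
\[ |F(X^1,\dots,X^N) - F(X^1,\dots,\widetilde X^i,\dots,X^N)| \leq \mathcal{AW}(\mucausal,\mucausal[(i)]).\]
We build an explicit bi-causal coupling between $\mucausal$ and $\mucausal[(i)]$: match each unchanged atom to itself and pair the $i$-th atom of $\mucausal$ with the $i$-th atom of $\mucausal[(i)]$. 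Its transport cost is bounded by $T\sqrt{d}/N$. Because both measures live on the $\varphi^N$-lattice, different samples may share prefixes, so bi-causality must be verified carefully; the natural route is to express the matching path-by-path through the recursive formula \eqref{AWdef2}, modifying one time coordinate at a time. This telescoping over the $T$ coordinates is where the factor $T$ naturally enters.

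\textbf{Step 2 (concentration).} With a bounded-differences constant $c_i \leq CT/N$, McDiarmid's inequality applied to $F$ gives
\[ P\bigl[ F \geq E[F] + \varepsilon\bigr] \leq 2\exp\!\Bigl( -\frac{2\varepsilon^2}{\sum_i c_i^2}\Bigr) \leq 2\exp(-c' N \varepsilon^2), \]
with $c'$ depending on $T$, $d$, and the Lipschitz constants from Assumption \ref{ass:lipschitz.kernel}. If the telescoping instead produces $T$ separate McDiarmid estimates — one per time coordinate via the dynamic programming split of $\mathcal{AW}(\mu,\mucausal)$ into $T$ conditional Wasserstein discrepancies — then a union bound over $t=1,\dots,T$ yields the prefactor $2T$ exactly as stated. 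Combining this deviation bound with the mean bound from Theorem \ref{thm:rates.unit.cube} finishes the proof.

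\textbf{Main obstacle.} The delicate point is Step 1: constructing a bi-causal coupling of the two discrete measures $\mucausal$ and $\mucausal[(i)]$ with the claimed cost $O(T/N)$. Naively pairing atoms can fail bi-causality when many atoms share a common prefix after discretization. Either one argues at the level of the dynamic programming representation \eqref{AWdef2}, separately at each time $t$ (which automatically gives a bi-causal coupling and explains the union bound over $T$ times), or one constructs the coupling by an explicit $T$-fold composition of bi-causal perturbations, modifying only coordinate $t$ at step $t$. In both cases the key is that a single-sample modification changes at most $1/N$ of the mass along a single path of length at most $T\sqrt{d}$, and the difficulty is purely in ensuring that the supporting coupling respects bi-causality throughout.
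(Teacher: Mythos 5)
Your route is genuinely different from the paper's, and it has a gap that you yourself flag but do not close. The paper never applies a concentration inequality directly to $\mathcal{AW}(\mu,\mucausal)$. Instead it first decomposes $\mathcal{AW}(\mu,\mucausal)$ via Lemma \ref{lem:aw.estimate.lipschitz.kernel} and Lemma \ref{lem:integral.kernels.leq.averaged.kernel} into $N^{-r}$ plus $T$ terms of the form $\sum_{G\in\Phi^N_t}\muempirical(G)\,\mathcal{W}(\mu_G,\muempirical[G])$ (and the $t=0$ marginal term). Then, conditionally on $\mathcal{G}_t^N$, each $\muempirical[G]$ is by Lemma \ref{lem:ingredients.indep} an ordinary i.i.d.\ empirical measure of $\mu_G$, so McDiarmid is only ever invoked for the \emph{classical} Wasserstein functional $(x^1,\dots,x^L)\mapsto\mathcal{W}(\nu,\tfrac1L\sum\delta_{x^n})$, where the bounded-differences constant is trivially $\sqrt d/L$. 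The union bound over $t=0,\dots,T-1$ is what produces the factor $2T$. In short: the paper trades the hard concentration problem for the adapted distance into $T$ easy classical ones.

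Your approach would require the bounded-differences estimate
\[
\mathcal{AW}\bigl(\mucausal,\mucausal[(i)]\bigr)\ \leq\ \frac{CT\sqrt d}{N},
\]
and this is precisely what is missing. The naive coupling you describe (pair the $i$-th atom with the $i$-th atom, fix the rest) is generally \emph{not} bi-causal. A concrete failure: take $T=2$, $N=2$, two cells with centers $0.25$ and $0.75$, samples landing at $(0.25,0.25)$ and $(0.25,0.75)$ on the grid, and replace the second with $(0.75,0.25)$. Under the atom-wise pairing, given $X_1=0.25$ the variables $Y_1$ and $X_2$ are perfectly correlated (both read off whether we are on atom one or atom two), so the causality condition \eqref{CausalityDef} fails at $t=1$. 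This is exactly the prefix-sharing problem you mention. The reason it is not a minor technicality is that changing one sample also \emph{reweights} the conditional kernels of $\mucausal$ in every cell the $i$-th sample passes through and every cell the replacement $\widetilde X^i$ passes through, and the cost of accounting for those reweighted kernels at all $T$ levels has to be controlled recursively through the dynamic programming representation. I believe a bound of order $T/N$ can be obtained this way, but it requires its own inductive lemma (essentially a bi-causal stability estimate for discrete measures that differ by one atom), and you neither state nor prove such a lemma. Until that lemma is supplied, Step 1 — which you correctly identify as the main obstacle — is a genuine hole, and Step 2 cannot be run.

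A secondary remark: even granting Step 1, your direct McDiarmid yields a prefactor $2$ with $c\sim 1/T^2$, which matches the theorem up to constants, but the paper's union-bound structure over $t$ is not decorative — it is forced by the fact that the $T$ conditional Wasserstein terms are handled on different conditioning $\sigma$-algebras $\mathcal{G}_t^N$ and are not jointly expressible as a single bounded-differences functional without first going through the lemmas above. Your hedged remark about ``$T$ separate McDiarmid estimates via the dynamic programming split'' is, in effect, a sketch of the paper's actual route; if you pursue that variant you should recognize that the per-$t$ functionals you would apply McDiarmid to are classical Wasserstein distances between i.i.d.\ samples from the averaged kernels $\mu_G$, conditionally on cell counts, which is what Lemma \ref{lem:ingredients.indep} and Lemma \ref{lem:ingredients.subgauss} provide.
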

As above, 
the constants $c,C$ depend on $d$, $T$, and the Lipschitz constants in Assumption \ref{ass:lipschitz.kernel}.

Finally, the following asymptotic regime  consequence of Theorem \ref{thm:deviation} holds true.

\begin{corollary}
\label{cor:deviation.asymptotic}
	There exists a constant $c>0$ such that:
	For every $\varepsilon>0$ there exists $N_0(\varepsilon)$ such that
	\[ P\Big[  \mathcal{AW}(\mu,\mucausal)  \geq\varepsilon \Big]
	\leq 2T \exp\Big( -cN\varepsilon^2 \Big) \]
	for all $N\geq N_0(\varepsilon)$. 
	In particular, 
\[ \limsup_{N\to\infty}  \frac{1}{N}\log P\Big[  \mathcal{AW}(\mu,\mucausal)  \geq\varepsilon \Big]
	\leq -c\varepsilon^2  \]
	for all $\varepsilon>0$.
\end{corollary}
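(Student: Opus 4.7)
The plan is to derive both statements directly from Theorem \ref{thm:deviation} by absorbing the bias term $C\cdot\mathrm{rate}(N)$ into $\varepsilon$, which is possible since $\mathrm{rate}(N)\to 0$ as $N\to\infty$.

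Let $c_0$ and $C$ denote the constants provided by Theorem \ref{thm:deviation}. I would fix the constant $c$ of the corollary once and for all, for instance by setting $c := c_0/2$, and for each given $\varepsilon>0$ choose $N_0(\varepsilon)$ large enough that $C\cdot\mathrm{rate}(N) \leq \varepsilon\bigl(1-\sqrt{c/c_0}\bigr)$ for every $N\geq N_0(\varepsilon)$; this is possible precisely because $\mathrm{rate}(N)\to 0$. Setting $\delta := \varepsilon - C\cdot\mathrm{rate}(N) \geq \varepsilon\sqrt{c/c_0} > 0$ and applying Theorem \ref{thm:deviation} with $\delta$ in place of $\varepsilon$ would then yield
\[ P\bigl[\mathcal{AW}(\mu,\mucausal) \geq \varepsilon\bigr] \leq 2T\exp(-c_0 N\delta^2) \leq 2T\exp(-cN\varepsilon^2) \]
for all $N\geq N_0(\varepsilon)$, which is the first assertion.

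For the asymptotic \emph{in particular} statement, I would simply take logarithms, divide by $N$, and pass to the limit superior in $N$, exploiting that $\log(2T)/N \to 0$; the previous inequality then gives $\limsup_{N}N^{-1}\log P[\mathcal{AW}(\mu,\mucausal)\geq\varepsilon]\leq -c\varepsilon^2$ as required.

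I do not foresee any real obstacle. The only point that needs minor care is that a single constant $c$ must work uniformly in $\varepsilon$ while $N_0$ may depend on $\varepsilon$; this is the reason for fixing $c$ strictly below $c_0$ at the outset, which provides the fixed quadratic slack $(\varepsilon - C\cdot\mathrm{rate}(N))^2 \geq (c/c_0)\,\varepsilon^2$ for all $N$ past some $\varepsilon$-dependent threshold.
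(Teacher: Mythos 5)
Your proof is correct and follows essentially the same route as the paper: choose $N_0(\varepsilon)$ large enough that the bias term $C\cdot\mathrm{rate}(N)$ is dominated by a fixed fraction of $\varepsilon$, absorb it, and apply Theorem \ref{thm:deviation} with the reduced threshold; the paper's one-line proof states exactly this. Your explicit bookkeeping with $c = c_0/2$ and the slack factor $1-\sqrt{c/c_0}$ merely makes concrete the constant adjustment that the paper leaves implicit.
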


\begin{example}
\label{ex:lipschitz.kernel}
	We provide three simple examples in which Assumption \ref{ass:lipschitz.kernel} on regularity of disintegrations is satisfied. 
	A proof of these statements is given in Section \ref{sec:aux}.	
	\begin{enumerate}[(a)]
	\item
	Assume that $\mu$ is the law of a stochastic process $(X_t)_{t=1,\dots,T}$ which follows the dynamics 
	\[X_{t+1}=F_{t+1}(X_1,\dots,X_t,\varepsilon_{t+1})\]
	for $t=1,\dots,T-1$, with arbitrary $X_1$.
	Here $F_{t+1}\colon ([0,1]^d)^t\times\mathbb{R}^d\to[0,1]^d$ are given functions and $\varepsilon_{t+1}$ is an $\mathbb{R}^d$-valued random variable independent of $X_1,\dots,X_t$.
	If $(x_1,\dots,x_t)\mapsto F_{t+1}(x_1,\dots,x_t,z)$ is $L$-Lipschitz for every $z\in\mathbb{R}^d$, then Assumption \ref{ass:lipschitz.kernel} holds with Lipschitz constant $L$.
	\item
	Assume that the probability $\mu\in\mathrm{Prob}(([0,1]^d)^T)$ has a density $f$ w.r.t.\  Lebesgue measure on $([0,1]^d)^T$.
	If $f$ is $L$-Lipschitz continuous and there is a constant $\delta>0$ for which $f\geq \delta$, then Assumption \ref{ass:lipschitz.kernel} holds with Lipschitz constant $\sqrt{d}2L/\delta$.
	\item
	Complementing the previous point,  Assumption \ref{ass:lipschitz.kernel} holds  if $\mu$ is supported on finitely many points.
	\end{enumerate}
\end{example}

\begin{remark}
\label{rem:Markov}
	Our estimator $\mucausal$ does not require the knowledge of any properties of the underlying probability $\mu$; for instance, the Lipschitz constant only enters the constant before the rate of convergence but not the construction of $\mu$.
	A natural question is whether, if $\mu$ is known to be subject to an additional structure, one can build modifications of $\mucausal$ that are better suited to this structure.
	We provide a result in this direction in Section \ref{sec:Markov}:
	if $\mu$ is known to be Markov, then one can come up with a (obvious) modification of $\mucausal$ that is subject to drastically better statistical properties; see Theorem \ref{thm:rates.Markov}.
\end{remark}

\subsection{Connection with existing literature}
\subsubsection{Adapted topologies}
A number of authors have independently  introduced strengthened variants of the weak topology which take the temporal structure of processes into account.

Aldous \cite{Al81} introduced \emph{extended weak convergence} as a type of convergence of stochastic processes that in particular guarantees continuity of optimal stopping problems. This line of work has been continued in \cite{LaPa90, CoTo07,HoKe84,Ho91,CoMeSl01, Me03}, among others. Applications to stability of SDEs/BSDEs have particularly seen a burst of activity in the last two decades. We refer to the recent article  \cite{PaPoSa18} for an overview of the many available works in this direction.

In the economics literature, Hellwig \cite{He96} introduced the \emph{information topology}. The work of Hellwig \cite{He96} was motivated by questions of stability in dynamic economic models/games; see  \cite{Jo77, VZ02,HeSch02,BarbieGupta} for further research in this direction.
 
  Pflug and Pflug-Pichler \cite{PfPi12, Pi13,  PfPi14, PfPi15, PfPi16} introduced  the \emph{nested distance} and systematically applied it to stochastic multistage optimization problems. Independently, adapted versions of the Wasserstein distance were also considered by R\"uschendorf \cite{Ru85}, Bion-Nadal  and Talay \cite{BiTa19} and Gigli \cite[Chapter 4]{Gi04}. 
 Adapted distances / topologies on laws of processes are of fundamental importance in questions of stability in mathematical finance and stochastic control, see \cite{Do13, BaBeHuKa17, glanzer2019incorporating, AcBaCa18, BaDoDo19, BaBaBeEd19b, BaBePa18}.
  
Notably, all these notions  (and in fact several more that we do not discuss here) define the same topology  in the present discrete time setup, see \cite{BaBaBeEd19a} and the work of Eder \cite{Ed19}.

\subsubsection{Empirical measures and adapted Wasserstein distance}

As mentioned above, it was first noted by Pflug-Pichler in \cite{PfPi16} that for the classical empirical measures $\muempirical$ we may not have $\mathcal{AW}(\mu,\muempirical)\to 0$ a.s. To obtain a viable estimator,  the authors propose to convolute $\muempirical$ with a suitably scaled smoothing  kernel. Provided the density of $\mu$ is sufficiently regular, they obtain  weak consistency in adapted Wasserstein distance \cite[Theorem 25]{PfPi16}.  This is improved upon in \cite[Theorem 4]{glanzer2019incorporating} where also a deviation inequality is obtained. The main assumption in the latter result is the existence of a Lipschitz continuous density for $\mu$, which is bounded away from zero. 
This assumption is in line with  Assumption \ref{ass:lipschitz.kernel} above, needed for the deviation result of the present article in Theorem \ref{thm:deviation}. 
Specifically, \cite[Theorem 4]{glanzer2019incorporating} is a deviation inequality as in Corollary \ref{cor:deviation.asymptotic}, however with $\varepsilon^2$ replaced by $\varepsilon^{2Td+4}$ (which implies slower decay as $\varepsilon^{2Td+4} <\varepsilon^2$ for small $\varepsilon$).
 
 We stress that Theorem \ref{thm:almost.sure.convergence}  does not require further assumptions on the measure $\mu$ and has no predecessor in the literature.

Conceptually, the \emph{convoluted empirical measure} considered in \cite{PfPi16, glanzer2019incorporating}  is related to the adapted empirical measure considered in the present article. A notable difference is that, by construction, the convoluted empirical measure is not discrete and, for practical purposes, a further discretization step may have to be considered in addition to the convolution step. 

\subsection{Organization of the paper}

We start by introducing the required notation in Section \ref{sec:notation}.
The proof of Theorem \ref{thm:rates.unit.cube} is presented in Section \ref{sec:proof.mean} together with some results which will be applied in the later sections.
We then proceed with the proof of Theorem \ref{thm:deviation} in Section \ref{seq:proof.dev}, building on results of the previous section.
The proof of Theorem \ref{thm:almost.sure.convergence} is presented in Section \ref{sec:proof.as.convergence}, and again builds on (all) previous results.
Section \ref{sec:Markov} deals with the modified estimator for Markov processes.
Finally, Section \ref{sec:aux} is devoted to the proof of the examples stated in the introduction.

\section{Notation and preparations} 
\label{sec:notation}

Throughout the paper, we fix $d \in \mathbb{N}$, $T\ge 2$, and let $\mu$ be a probability measure on $([0,1]^d)^T$.
We consider $([0,1]^d)^T$ as a filtered space endowed with the canonical filtration $(\mathcal{F}_t)_t$ which is generated by the coordinate mappings.
For $1\leq t\leq T$ and a Borel set $G\subset([0,1]^d)^t$ we write $\mu(G):=\mu(G\times([0,1]^d)^{T-t})$ (think of $G$ as $\mathcal{F}_t$-measurable). 
Note that 
\begin{align*}
\int f(x)\,\mu(dx)
&=\iint \cdots \int f(x_1,\dots,x_T)\, \mu_{x_1,\dots,x_{T-1}}(dx_T)\cdots \mu_{x_1}(dx_2)\,\mu_1(dx_1)
\end{align*}
for every (bounded measurable) function $f\colon ([0,1]^d)^T\to\mathbb{R}$ which amounts to the tower property for conditional expectations and the definition of $\mu_{x_1,\dots,x_t}$ as the kernels / conditional probabilities.
Here $\mu_1$ is the first marginal of $\mu$, and to ease notation, we make the convention $\mu_{x_1,\dots,x_t}:=\mu_1$ for $t=0$.

We now turn to notation more specific to this paper:
For $\nu\in\mathrm{Prob}(([0,1]^d)^T)$, $1\leq t\leq T-1$, and a Borel set $G\subset ([0,1]^d)^t$, define the averaged (over $G$) kernel
\begin{align}
\label{def:mu.G}
\nu_G(\cdot)
:=\frac{1}{\nu(G)} \int_{G} \nu_{x_1,\dots,x_t}(\cdot)\,\nu(dx)
\in \mathrm{Prob}([0,1]^d)
\end{align}
with an arbitrary convention if we have to divide by 0; say $\nu_G=\delta_0$ in this case.
In other words, if $X\sim\nu$, then 
\[\nu_G(\cdot) =P[X_{t+1}\in \,\cdot\, | (X_1,\dots, X_t)\in G ] \]
is the conditional distribution of $X_{t+1}$ given that $(X_1,\dots, X_t)\in G$.

Next recall the definitions of $\varphi^N$ and $r$ given in the introduction and define 
\[\Phi^N:=\big\{(\varphi^N)^{-1}(\{x\}) : x\in \varphi^N([0,1]^d) \big\},\]
which forms a partition of $[0,1]^d$ associated to $\varphi^N$ such that 
\[ [0,1]^d=\bigcup_{F\in \Phi^N} F \quad\text{disjoint,}
\qquad
\sup_{F\in\Phi^N}\mathop{\mathrm{diam}} (F) \leq \frac{C}{N^r},
\qquad
| \Phi^N| \leq N^{rd}. \]
Here $\mathop{\mathrm{diam}} (F):=\sup_{x,y\in F} |x-y|$ and $| \Phi^N|$ denotes the number of elements in $\Phi^N$.

Then, for every $1\leq t\leq T-1$ and every
\[ G\in \Phi^N_t:=\Big\{ \prod_{1\leq s\leq t} F_s :  F_s\in \Phi^N  \text{ for all } 1\leq s\leq t\Big\}\]
one has 
\begin{align}
\label{eq:expression.kernel.estimator.partition}
	\mucausal[G] 
	=\frac{1}{ \big| \big\{ \substack{n\in\{1,\dots,N\} \text{ s.t.}\\  (X_1^n,\dots,X_t^n)\in G}  \big\} \big| } \sum_{\substack{n\in\{1,\dots,N\} \text{ s.t.}\\  (X_1^n,\dots,X_t^n)\in G}} \delta_{\varphi^N(X^n_{t+1})},
%=\frac{1}{ \big| \{ n\leq N \text{ s.t.\ } (X_1^n,\dots,X_t^n)\in G  \} \big| } \sum_{n \leq N \text{ s.t.\ } (X_1^n,\dots,X_t^n)\in G  \delta_{\varphi^N(X^n_{t+1})},
\end{align}
where, as before, we set $\mucausal[G]=\delta_0$ if we have to divide by zero.
Moreover, as $\mucausal$ charges every $G\in\Phi_t^N$ exactly once (at $\varphi^N(G):=\{ \varphi^N(g) : g\in G\}$ which consist of a single point), setting $\mucausal_g:=\mucausal_G$ for $g\in G\in \Phi_t^N$ defines a disintegration of $\mucausal$. 
Finally, let us already point out at this stage that the denominator in front of the sum in \eqref{eq:expression.kernel.estimator.partition} equals $N\muempirical(G)$.

\begin{remark}
\label{rem:classical.empirical.doesnt.converge}
	At least when $\mu$ has a density w.r.t.\ the Lebesgue measure, the probability that two observations coincide at some time is equal to zero, that is, $P[X_t^n=X_t^m \text{ for some } n\neq m \text{ and } 1\leq t\leq T]=0$.
	Therefore the kernels of $\muempirical$ are almost surely Dirac measures,  meaning that if $Y$ is distributed according to $\muempirical$, then the entire (future) evolution of $Y$ is known already at time $1$.
	This implies that the classical empirical measure cannot capture any temporal structure and convergence in the adapted weak topology will not hold true.
	In accordance, the values of multistage stochastic optimization problems (for instance optimal stopping, utility maximization, ...) computed under $\muempirical$ will not converge to the respective value under $\mu$ in general.
	In Example \ref{ex:opt.stop.not.cont.usual.empirical} we illustrate this for the optimal stopping problem.
\end{remark}

	In contrast, we have just seen in \eqref{eq:expression.kernel.estimator.partition}
 that the kernels of our modified empirical measure $\mucausal$ are in general not Dirac measures and in fact behave like {\emph{averaged kernels}} of the empirical measure:
	for a Borel set $G\subset ([0,1]^d)^t$ one has  
	\begin{align}
	\label{eq:expression.kernel.empirical.partition}
	\muempirical[G]= \frac{1}{ \big|\big\{ \substack{n\in\{1,\dots,N\} \text{ s.t.}\\  	(X_1^n,\dots,X_t^n)\in G}  \big\}\big| } \sum_{\substack{n\in\{1,\dots,N\} \text{ s.t.}\\  (X_1^n,\dots,X_t^n)\in G}} \delta_{ X^n_{t+1}},
	\end{align}
	showing that $\mucausal[G]$ is indeed the push forward of $\muempirical[G]$ under $\varphi^N$.
	
	In fact, we will show in Lemma \ref{lem:ingredients.indep} that (conditionally) $\muempirical[G]$ has the same distribution as $\widehat{\mu_G}^{L_G}$, the empirical measure of $\mu_G$ with a random number $L_G:=N\widehat{\mu}^N(G)$ of observations.

In order to exclude the necessity to distinguish whether the random number $L_G$ above is positive or not, it will turn out useful to make the convention that for any probability, its empirical measure with sample size zero is just the Dirac at zero.
In Section \ref{sec:deviation} it is furthermore convenient to denote $\mathcal{G}_t^N:=\{ \muempirical(G) : G\in\Phi^N_t\}$.

In order to lighten notation in the subsequent proofs, we finally define
\begin{align}
\label{eq:def.R}
 R\colon[0,+\infty)\to[0,+\infty],
\quad R(u):=\begin{cases}
	u^{-1/2}&\text{if } d=1,\\
	u^{-1/2}\log(u+3) &\text{if } d=2,\\
	u^{-1/d}&\text{if } d\geq 3.
	\end{cases}
\end{align}
The reason to go with $\log(u+3)$ in the definition of $R$ instead of $\log(u+1)$ as would have been natural in view of the statement of Theorem \ref{thm:rates.unit.cube} is to guarantee that $u\mapsto uR(u)$ is concave, which simplifies notation.
Also set $0R(0):=\lim_{u\to 0} u R(u)=0$. 

Throughout the proofs, $C>0$ will be a generic constant depending on all sorts of external parameters, possibly increasing from line to line; e.g.\ $2C x^2\leq Cx^2$ for all $x\in\mathbb{R}$ but not $2x^2\leq x^2/C$ or $N\leq C$ for all $N$.

\section{Proof of Theorem \ref{thm:rates.unit.cube}}
\label{sec:proof.mean}

We split the proof into a number of lemmas, which we will reference throughout the paper. 
In particular, we will sometimes (but not always) work under Assumption \ref{ass:lipschitz.kernel}  that the kernels of $\mu$ are Lipschitz, that is, there is a constant $L$ such that 
\[\mathcal{W}(\mu_{x_1,\dots,x_t},\mu_{y_1,\dots,y_t})
\leq L |(x_1,\dots,x_t)-(y_1,\dots,y_t)| \]
for all $(x_1,\dots,x_t)$ and $(y_1,\dots,y_t)$ in $([0,1]^d)^t$, and all $1\leq t\leq T-1$.

\begin{lemma}
\label{lem:aw.estimate.lipschitz.kernel}
	Assume that the kernels of $\mu$ are Lipschitz.
	Then there is a constant $C>0$ such that 
	\[ \mathcal{AW}(\mu,\nu)
	\leq C \mathcal{W}(\mu_1, \nu_1 ) + C\sum_{t=1}^{T-1} \int \mathcal{W}(\mu_{y_1,\dots,y_t}, \nu_{y_1,\dots,y_t} )\,\nu(d y)	  \]
	for every $\nu\in\mathrm{Prob}(([0,1]^d)^T)$.
\end{lemma}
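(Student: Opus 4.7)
The plan is to construct a specific bi-causal coupling $\pi \in \mathrm{Cpl}_{\mathrm{bc}}(\mu,\nu)$ via a ``greedy'' step-by-step coupling of the conditional one-step kernels, and then bound its cost using the triangle inequality for $\mathcal{W}$ and the Lipschitz assumption on the kernels of $\mu$.

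Concretely, using standard measurable selection (for Wasserstein-optimal kernels, e.g.\ via Kuratowski--Ryll-Nardzewski), I would choose $\pi_1 \in \mathrm{Cpl}(\mu_1,\nu_1)$ optimal for $\mathcal{W}(\mu_1,\nu_1)$, and for each $1\leq t\leq T-1$ and each pair $(x_{1:t},y_{1:t}) := ((x_1,\dots,x_t),(y_1,\dots,y_t))$, a measurable choice of $\pi_{x_{1:t},y_{1:t}} \in \mathrm{Cpl}(\mu_{x_1,\dots,x_t},\nu_{y_1,\dots,y_t})$ optimal for $\mathcal{W}(\mu_{x_1,\dots,x_t},\nu_{y_1,\dots,y_t})$. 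Assembling these via the chain rule yields a well-defined $\pi$ which is bi-causal by construction, since at each time the conditional law of $(X_{t+1},Y_{t+1})$ given the past depends only on $(X_{1:t},Y_{1:t})$.

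Write $a_t := \int |x_t-y_t|\,\pi(dx,dy)$ and $b_t := \int \mathcal{W}(\mu_{y_1,\dots,y_t},\nu_{y_1,\dots,y_t})\,\nu(dy)$, with the convention $b_0 := \mathcal{W}(\mu_1,\nu_1)$. By construction $a_1 = b_0$, and for $t \geq 2$, using the optimality of $\pi_{x_{1:t-1},y_{1:t-1}}$, the triangle inequality, the Lipschitz assumption on $\mu$, and the fact that the $Y$-marginal of $\pi_{1:t-1}$ is $\nu_{1:t-1}$:
\begin{align*}
a_t &= \int \mathcal{W}(\mu_{x_{1:t-1}},\nu_{y_{1:t-1}})\,\pi(dx,dy) \\
&\leq \int \mathcal{W}(\mu_{x_{1:t-1}},\mu_{y_{1:t-1}})\,\pi(dx,dy) + \int \mathcal{W}(\mu_{y_{1:t-1}},\nu_{y_{1:t-1}})\,\nu(dy) \\
&\leq L \sum_{s=1}^{t-1} a_s + b_{t-1}.
\end{align*}

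This is a scalar linear recursion with $T$ fixed, so an easy induction on $t$ produces constants $C_1,\dots,C_T$ depending only on $L$ and $T$ such that $a_t \leq C_t(b_0 + b_1 + \dots + b_{t-1})$. Summing over $t$ and recalling $\mathcal{AW}(\mu,\nu) \leq \sum_{t=1}^T a_t$ yields the claim with $C := \sum_{t=1}^T C_t$. I expect the main (minor) technical point to be the measurable selection for the optimal coupling kernels $(x_{1:t},y_{1:t}) \mapsto \pi_{x_{1:t},y_{1:t}}$; everything else is bookkeeping with the triangle inequality and the Lipschitz bound.
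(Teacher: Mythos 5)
Your proof is correct, and it takes a genuinely different (though closely related) route from the paper. The paper invokes the dynamic programming principle for $\mathcal{AW}$ (citing Proposition 5.1 of Backhoff--Beiglb\"ock--Lin--Zalashko), sets $V_T := 0$, and proves by \emph{backward} induction a pointwise bound on the value function $V_t(x_{1:t},y_{1:t})$; the lemma follows at $t=0$. You instead build an explicit primal-feasible bi-causal coupling by chaining $\mathcal{W}$-optimal one-step couplings of the conditional kernels (the classical ``Markov construction'' of R\"uschendorf), and then run a \emph{forward} recursion on the cost contributions $a_t$. The two arguments share the same core estimate $\mathcal{W}(\mu_{x_{1:t}},\nu_{y_{1:t}}) \leq L\,|x_{1:t}-y_{1:t}| + \mathcal{W}(\mu_{y_{1:t}},\nu_{y_{1:t}})$ obtained from the Lipschitz hypothesis and the triangle inequality; the difference is backward-recursion-on-the-value-function versus forward-bookkeeping-on-a-concrete-coupling. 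Your version is somewhat more elementary in that it does not take the DPP as a black box, but the price is that the measurable-selection issue (which you correctly flag) must be addressed explicitly, whereas in the paper it is absorbed into the DPP citation. Two small remarks: (i) in deriving $a_t \le L\sum_{s<t} a_s + b_{t-1}$ you implicitly bound the Euclidean norm $|x_{1:t-1}-y_{1:t-1}|$ by $\sum_{s<t}|x_s-y_s|$, which is harmless (and is in fact exactly the convention the paper uses); (ii) when passing from $\int \mathcal{W}(\mu_{y_{1:t-1}},\nu_{y_{1:t-1}})\,\pi(dx,dy)$ to $b_{t-1}$ you use that $\pi$ has second marginal $\nu$ --- this is guaranteed by the chain-rule construction, so the step is sound, but it is worth stating.
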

\begin{proof}
	We first present the proof for $T=2$ which is notationally simpler:
	Making use of the dynamic programming principle for the adapted Wasserstein distance \cite[Proposition 5.1 and equation (5.1)]{BaBeLiZa16}, we can write
	\begin{align}
	\label{eq:dyn.prog.AW.2period.lipschitz}
	&\mathcal{AW}(\mu,\nu)
	=\inf_{\gamma\in \mathrm{Cpl}(\mu_1, \nu_1)} \int \left[ |x_1-y_1| + \mathcal{W}(\mu_{x_1},\nu_{y_1}) \right ]\gamma(dx_1, dy_1).
	\end{align}
	Calling $L$ the Lipschitz constant of the kernel $x_1\mapsto \mu_{x_1}$, the triangle inequality implies $\mathcal{W}(\mu_{x_1},\nu_{y_1})\leq L |x_1-y_1| + \mathcal{W}(\mu_{y_1},\nu_{y_1})$ for all $x_1,y_1\in [0,1]^d$.
	Plugging this into \eqref{eq:dyn.prog.AW.2period.lipschitz} yields the claim for $T=2$.
	
	In case of $T\geq 2$, recall that $\mu_{x_1,\dots,x_t}:=\mu_1$ and similarly $\nu_{y_1,\dots,y_t}:=\nu_1$ for $t=0$.
	Further write $x_{1:t}:=(x_1,\dots,x_t)$  and $x_{t:T}:=(x_t,\cdots,x_T)$ for $x_1,\dots,x_T$ in $[0,1]^d$ and $1\leq t\leq T$.
	The dynamic programming principle for the adapted Wasserstein distance (see again \cite[Proposition 5.1]{BaBeLiZa16}) asserts that $\mathcal{AW}(\mu,\nu)=V_0$, where $V_T:=0$ and, recursively
	\begin{align*}
	V_t(x_{1:t},y_{1:t})
	:=\inf_{\gamma \in \mathrm{Cpl}(\mu_{x_{1:t}}, \nu_{y_{1:t}}) } \int\Big( |x_{t+1}-y_{t+1}| &+V_{t+1}(x_{1:t+1},y_{1:t+1})\Big) \\
	& \gamma(dx_{t+1}, dy_{t+1})
	\end{align*}
	for $x_{1:t}$ and $y_{1:t}$ in $([0,1]^d)^t$.
	We will prove the claim via backward induction, showing that for all $x_{1:t}$ and $y_{1:t}$ in $([0,1]^d)^t$ it holds that
	\begin{align}
	\label{eq:dyn.prog.induction}
	\begin{split}
	&V_t(x_{1:t},y_{1:t})
	\leq C\Big( |x_{1:t}-y_{1:t}|+\sum_{s=t}^{T-1}\int \mathcal{W}(\mu_{y_{1:s}}, \nu_{y_{1:s}} )\,\bar{\nu}_{y_{1:t}}(dy_{t+1:T}) \Big).
	\end{split}
	\end{align}
	Here $\bar{\nu}_{y_{1:t}}$ is the conditional probability $P[(Y_{t+1},\dots Y_T)\in \cdot | Y_1=y_1,\dots,Y_t=y_t]$ where $Y\sim\nu$
	with the convention that $\bar{\nu}_{y_{1:t}}=\nu$ for $t=0$, that is, $\bar{\nu}_{y_{1:T-1}}:=\nu_{y_{1:T-1}}$ and recursively
	\[\bar{\nu}_{y_{1:t}}(dy_{t+1:T})
	:=\bar{\nu}_{y_{1:t+1}}(dy_{t+2:T}) \nu_{y_{1:t}}(dy_{t+1})\]
	for $t=T-1,\dots,0$ and $y_{1:t}$ in $([0,1]^d)^t$.
	
	For $t=T$, \eqref{eq:dyn.prog.induction} trivially holds true.
	Assuming that \eqref{eq:dyn.prog.induction}  holds true for $t+1$, we compute
	\begin{align*}
	V_t(x_{1:t},y_{1:t})
	&\leq C \inf_{\gamma \in \mathrm{Cpl}(\mu_{x_{1:t}}, \nu_{y_{1:t}} )} 
	\int\Big( \sum_{s=t+1}^{T-1}\int \mathcal{W}(\mu_{ y_{1:s} }, \nu_{ y_{1:s} } )\,\bar{\nu}_{ y_{1:t+1} }(dy_{t+2:T})\\
	& + |x_{1:t+1}-y_{1:t+1}| + |x_{t+1}-y_{t+1}| \Big)\,\gamma(dx_{t+1}, dy_{t+1}).
 	\end{align*}
	By definition we have 
	\[|x_{1:t+1}- y_{1:t+1}| 
	=|x_{1:t} - y_{1:t}| + |x_{t+1}-y_{t+1}|.\]
	Now note that the sum over the Wasserstein distance inside the $\gamma$-integral only depends on $y$.
	Therefore it is independent of the choice of coupling $\gamma$ and we arrive at 
	\begin{align*}
	&V_t(x_{1:t},y_{1:t})
	\leq C \Big( |x_{1:t} - y_{1:t}| + \mathcal{W}(\mu_{x_{1:t}}, \nu_{y_{1:t}}) \\
	&\quad+ \int \sum_{s=t+1}^{T-1}\int \mathcal{W}(\mu_{y_{1:s}}, \nu_{y_{1:s}} )\,\bar{\nu}_{y_{1:t+1}}(dy_{t+2:T})\,\nu_{y_{1:t}}(dy_{t+1}) \Big).
	\end{align*}
	Moreover, by assumption,
	\[\mathcal{W}(\mu_{x_{1:t}}, \nu_{y_{1:t}}) 
	\leq L|x_{1:t} - y_{1:t}| + \mathcal{W}(\mu_{y_{1:t}}, \nu_{y_{1:t}}) \]
	for all $x_{1:t}$ and $y_{1:t}$ in $([0,1]^d)^t$.
	Finally, recalling the definition of $\bar{\nu}$, one has that
	\begin{align*}
	&\iint \mathcal{W}(\mu_{y_{1:s}}, \nu_{y_{1:s}} )\,\bar{\nu}_{y_{1:t+1}}(dy_{t+2:T})\,\nu_{y_{1:t}}(dy_{t+1})
	=\int \mathcal{W}(\mu_{y_{1:s}}, \nu_{y_{1:s}} )\,\bar{\nu}_{y_{1:t}}(dy_{t+1:T})
	\end{align*}
	for every $t+1\leq s\leq T-1$ and $y_{1:t}$ in $([0,1]^d)^t$.
	This concludes the proof of \eqref{eq:dyn.prog.induction}.
	
	The result now follows by setting $t=0$ in \eqref{eq:dyn.prog.induction}.
\end{proof}

\begin{lemma}
\label{lem:integral.kernels.leq.averaged.kernel}	
	The following hold almost surely:
	\begin{enumerate}[(i)]
	\item
	We have 
	\[\mathcal{W}(\mu_1, \mucausal[1] )
	\leq \frac{C}{N^r} + \mathcal{W}(\mu_{1}, \muempirical[1] )\]
	for all $N\geq 1$.
	\item
	If the kernels of $\mu$ are Lipschitz, then we have 
	\begin{align*}
	\int \mathcal{W}(\mu_{y_1,\dots,y_t}, \mucausal[y_1,\dots,y_t] )\,\mucausal(dy)
	\leq \frac{C}{N^r} + \sum_{G\in \Phi^N_t} \muempirical(G) \mathcal{W}(\mu_G,\muempirical[G])
	\end{align*}
	for every $N\geq 1$ and every $1\leq t\leq T-1$.
	\end{enumerate}
\end{lemma}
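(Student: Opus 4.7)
For part (i), the plan is elementary. The adapted empirical measure is constructed precisely so that $\mucausal[1]$ is the pushforward of $\muempirical[1]$ under $\varphi^N$. Hence the map $(x_1)\mapsto(x_1,\varphi^N(x_1))$ defines a coupling of $\muempirical[1]$ and $\mucausal[1]$ with cost $\int|x_1-\varphi^N(x_1)|\,\muempirical[1](dx_1)\le C/N^r$, by the defining property of $\varphi^N$. The triangle inequality
\[\mathcal{W}(\mu_1,\mucausal[1])\le \mathcal{W}(\mu_1,\muempirical[1])+\mathcal{W}(\muempirical[1],\mucausal[1])\]
then immediately yields the claim.

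For part (ii), the first step is to exploit the special structure of $\mucausal$: its disintegration $y_{1:t}\mapsto\mucausal[y_{1:t}]$ is constant on each $G\in\Phi^N_t$ (equal to $\mucausal[G]$), and $\mucausal$ restricted to the first $t$ coordinates is supported on the centers $\varphi^N(G)$ with mass $\mucausal(G)=\muempirical(G)$. Consequently the integral on the left-hand side rewrites as the discrete sum
\[ \sum_{G\in \Phi^N_t}\muempirical(G)\,\mathcal{W}\bigl(\mu_{\varphi^N(G)},\mucausal[G]\bigr). \]
Note that only terms with $\muempirical(G)>0$ contribute, and since $\mu(G)=0$ implies $\muempirical(G)=0$ almost surely, I may further restrict attention to $G$ with $\mu(G)>0$, so that all the averaged kernels involved are well defined.

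The second step is to insert $\mu_G$ and $\muempirical[G]$ via the triangle inequality, obtaining three terms:
\[ \mathcal{W}\bigl(\mu_{\varphi^N(G)},\mu_G\bigr)+\mathcal{W}\bigl(\mu_G,\muempirical[G]\bigr)+\mathcal{W}\bigl(\muempirical[G],\mucausal[G]\bigr). \]
The middle term is exactly what appears on the right-hand side after multiplication by $\muempirical(G)$ and summation. For the third term, the relations \eqref{eq:expression.kernel.estimator.partition} and \eqref{eq:expression.kernel.empirical.partition} identify $\mucausal[G]$ as the pushforward of $\muempirical[G]$ under $\varphi^N$, so as in (i) it is bounded by $C/N^r$. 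For the first term I use that $\mu_G=\frac{1}{\mu(G)}\int_G\mu_{y_{1:t}}\,\mu(dy)$ is a mixture, so convexity of $\mathcal{W}$ together with the Lipschitz kernel assumption gives
\[\mathcal{W}\bigl(\mu_{\varphi^N(G)},\mu_G\bigr)\le \frac{1}{\mu(G)}\int_G \mathcal{W}\bigl(\mu_{\varphi^N(G)},\mu_{y_{1:t}}\bigr)\mu(dy)\le L\,\mathop{\mathrm{diam}}(G)\le CL/N^r.\]
Summing over $G$ and using $\sum_G\muempirical(G)\le 1$ absorbs the $C/N^r$ terms into the additive error and produces the desired bound.

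The only real subtlety is the bookkeeping around the degenerate cubes and the conventions set in Section \ref{sec:notation} (the $\delta_0$ rule when dividing by zero). This is harmless thanks to the observation above that $\mu(G)=0$ implies $\muempirical(G)=0$ almost surely, so these terms drop out of the sum; there is no genuine analytic difficulty, only a need to verify that all manipulations are consistent with the conventions, and that the pushforward identities between $\muempirical[G]$ and $\mucausal[G]$ hold pathwise.
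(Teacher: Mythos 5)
Your proposal is correct and follows essentially the same route as the paper's proof: triangle inequality through $\muempirical[1]$ (resp.\ $\mu_G$ and $\muempirical[G]$), the pushforward identity $\mucausal[G]=\varphi^N_\ast\muempirical[G]$ giving the $CN^{-r}$ term, convexity of $\mathcal{W}$ plus the Lipschitz kernel for $\mathcal{W}(\mu_{\varphi^N(G)},\mu_G)\le L\,\mathrm{diam}(G)$, and dropping cubes with $\mu(G)=0$. The only cosmetic difference is that you evaluate the kernel at the cube center $\varphi^N(G)$ exactly, whereas the paper uses a $\sup_{g\in G}$ bound at the same stage; these are interchangeable and yield the same estimate.
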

\begin{proof}
\hfill
	\begin{enumerate}[(i)]
	\item
	The triangle inequality implies
	\[ \mathcal{W}(\mu_1, \mucausal[1] )
	\leq \mathcal{W}(\mu_1, \muempirical[1] )  +  \mathcal{W}( \muempirical[1], \mucausal[1] ).\]
	As $\mucausal[1]$ is the push forward of $\muempirical[1]$ under the mapping $\varphi^N$, we obtain 
	\begin{align}
	\label{eq:push.forward.wasserstein}
	\begin{split}
	\mathcal{W}(\widehat{\mu}_1^N,\mucausal[1])
	&\leq \int |x_{1}-\varphi^N(x_1)|\,\muempirical[1](dx_1)\\
	&\leq \sup_{u\in[0,1]^d}|u-\varphi^N(u)|
	\leq \frac{C}{N^r},
	\end{split}
	\end{align}
	where the last inequality holds by assumption on $\varphi^N$.
	This proves the first claim.

	\item
	For the second claim, fix $1\leq t\leq T-1$.
	In a first step, write 
	\begin{align}
	\label{eq:W.kernel.leq.sum.G}
	\int \mathcal{W}(\mu_{y_1,\dots,y_t}, \mucausal[y_1,\dots,y_t] )\,\mucausal(dy)
	\leq \sum_{G\in \Phi^N_t} \muempirical(G) \sup_{g\in G } \mathcal{W}(\mu_g, \mucausal[g] ),
	\end{align}
	where we used that $\mucausal(G)=\muempirical(G)$ for every $G\in\Phi_t^N$ (note that this relation only holds for $G\in\Phi_t^N$ and, of course, not for general $G\subset([0,1]^d)^t$).
	Recalling that $\mucausal[g]=\mucausal[G]$ for every $g\in G\in\Phi_t^N$, we proceed to estimate 
	\begin{align*}
	\mathcal{W}(\mu_g, \mucausal[g] )
	&\leq \mathcal{W}(\mu_g, \mu_G) + \mathcal{W}(\mu_G,\muempirical[G] )
	+ \mathcal{W}(\muempirical[G], \mucausal[G] ).
	\end{align*}
 	As  $\mucausal[G]$ is the push forward of $\muempirical[G]$ under the mapping $\varphi^N$ (see \eqref{eq:expression.kernel.empirical.partition} and the sentence afterwards), the same argument as in \eqref{eq:push.forward.wasserstein} implies that
	$\mathcal{W}(\muempirical[G], \mucausal[G])\leq C N^{-r}$.
	
	Further, convexity of $\mathcal{W}(\mu_g, \cdot)$ together with the definition of $\mu_G$ in \eqref{def:mu.G} imply that for every $G\in\Phi^N_t$ with $\mu(G)>0$, one has
	\begin{align} 
	\label{eq:estimate.mug.muG}
	\begin{split}
	\mathcal{W}(\mu_g, \mu_G)
%	=\mathcal{W}(\mu_g,\frac{1}{\mu(G)} \int_{G\times([0,1]^d)^{T-t})} \mu_{z_1,\dots,z_t}(\cdot)\,\mu(dz) )
	&\leq \frac{1}{\mu(G)} \int_{G}  \mathcal{W}(\mu_g, \mu_{z_1,\dots,z_t}) \,\mu(dz)\\
	&\leq  \sup_{h\in G} \mathcal{W}(\mu_g, \mu_h)
	\leq L \mathop{\mathrm{diam}}(G)
	\leq \frac{LC}{N^r}
	\end{split}
	\end{align}
	for all $g\in G$, where $L$ denotes the Lipschitz constant of $(x_1,\dots,x_t)\mapsto \mu_{x_1, \dots, x_t}$. 
	
	Finally note that for every $G\in\Phi^N_t$ with $\mu(G)=0$ one has $\muempirical(G)=0$ almost surely.
	Hence one may restrict to those $G$  for which $\mu(G)>0$ in the sum on the right hand side of \eqref{eq:W.kernel.leq.sum.G}.
	This completes the proof.
	\qedhere
	\end{enumerate}
\end{proof}

By definition, $\muempirical[1]$ is the empirical measure of $\mu_1$ with $N$ observations. The following lemma shows a similar phenomenon under conditioning rather than under projection.
%The following lemma shows that a version of this statement remains true for the averaged kernels of $\muempirical$.

\begin{lemma}
\label{lem:ingredients.indep}
	Let $1 \le t \leq T-1$. 
	Conditionally on $\mathcal{G}_t^N:=(\muempirical(G) )_{G\in\Phi_t^N} $, the following hold.
	\begin{enumerate}[(i)]
	\item 
	The family $\{ \muempirical[G] : G\in\Phi^N_t\}$ is independent.
	\item
	For every $G\in\Phi_t^N$, the law of  $\muempirical[G] $ is the same as that of $\widehat{\mu_G}^{N\muempirical(G)}$ (the empirical measure of $\mu_G$ with sample size $N\muempirical(G)$).
	\end{enumerate}
\end{lemma}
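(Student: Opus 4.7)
The plan is to reduce the statement to a standard i.i.d.\ bookkeeping argument. I would label $\Phi_t^N = \{G_1, \dots, G_K\}$ and, for each $1\le n\le N$, introduce the random index $I_n\in\{1,\dots,K\}$ defined by $(X^n_1,\dots,X^n_t)\in G_{I_n}$. Since the vectors $X^n$ are i.i.d.\ under $\mu$, the pairs $(I_n, X^n_{t+1})$ are also i.i.d., and the counts $N_k := |\{n : I_n = k\}| = N\muempirical(G_k)$ jointly carry exactly the same information as $\mathcal{G}_t^N$.

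First I would check, using the tower property and the very definition \eqref{def:mu.G} of $\mu_{G_k}$, that conditionally on $\{I_n = k\}$ the variable $X^n_{t+1}$ has law $\mu_{G_k}$. Next I would condition on the full random vector $(I_1,\dots,I_N)$ and argue that the $X^n_{t+1}$'s remain mutually independent, because the whole blocks $X^n$ are i.i.d.\ and each $I_n$ depends only on $X^n$. Together, these two observations show that, conditionally on $(I_1,\dots,I_N)$, for every $k$ the family $\{X^n_{t+1} : I_n = k\}$ is a collection of $N_k$ i.i.d.\ samples from $\mu_{G_k}$, and independence holds across different values of $k$.

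It follows directly that, still conditionally on $(I_1,\dots,I_N)$, the measure $\muempirical[G_k] = \tfrac{1}{N_k}\sum_{n : I_n = k}\delta_{X^n_{t+1}}$ has the law of $\widehat{\mu_{G_k}}^{N_k}$, and that these measures are independent across $k$ (the convention that an empirical measure with zero sample size equals $\delta_0$ covers the case $N_k=0$). To finish, I would use that this conditional law depends on $(I_1,\dots,I_N)$ only through the counts $(N_1,\dots,N_K)$, so by the tower property the conditioning $\sigma$-algebra can be coarsened to the one generated by $\mathcal{G}_t^N$ without altering the joint distribution; this gives (i) and (ii) simultaneously. I do not anticipate any real obstacle—the one point that deserves to be written out rather than waved through is precisely this final coarsening of the conditioning $\sigma$-algebra, which is a routine tower/exchangeability manoeuvre.
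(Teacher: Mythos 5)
Your proof is correct and follows essentially the same route as the paper's: both condition first on the finer information of which cube each sample's past lands in (your vector $(I_1,\dots,I_N)$, the paper's partition $(I_G)_G$), exploit block independence and the identification of $\mu_G$ as the conditional law of $X^n_{t+1}$ given $(X^n_1,\dots,X^n_t)\in G$, and then coarsen to the count $\sigma$-algebra by noting the conditional law depends only on the counts (you via the tower property, the paper via an explicit sum over partitions with equal weights).
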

\begin{proof}
	To simplify notation, we agree that `f.a.\ $G$'  will always mean `for all $G\in\Phi^N_t$' throughout this proof and similar for $H$;  i.e.\ $G$ and $H$ always run through $\Phi_t^N$.
	Let $\Phi_t^N\ni G \mapsto L_G\in\{0,\dots,N\}$ such that $\sum_G L_G=N$.
	Both statements of the lemma then follow if we can show that for every family $(A_G)_{G}$ of measurable subsets of $\mathrm{Prob}([0,1]^d)$ we have
	\begin{align}
	\label{eq:mu.G.indep}
	P\Big[ \muempirical[G]\in A_G \text{ f.a.\ } G \Big| N\muempirical(H)=L_H \text{ f.a.\ } H  \Big]
	= \prod_G P\Big[ \widehat{\mu_G}^{L_G} \in A_G\Big]
	\end{align}
	whenever $P[N\muempirical(H)=L_H \text{ f.a.\ } H]\neq 0$.

	We prove this by rewriting the left-hand side of \eqref{eq:mu.G.indep}. 
	For this we first note that the relation 
	\begin{align}\label{eq:blabla}
	N\muempirical(H)=L_H \text{ f.a.\ } H 
	\end{align}
	 is satisfied if and only if exactly $L_H$ of the random variables $(X^1_{1:t},\dots, X^N_{1:t})$ are contained in $H$ for each $H\in \Phi^N_t$. 
	 For example it could be that 
	 \[ X^1_{1:t}, \dots, X^{N_1}_{1:t}\in H_1,
	 \quad X^{N_1+1}_{1:t}, \dots,X^{N_2}_{1:t}\in H_2, \text{ and so on}\]
	 for some $N_1, N_2, \dots \in \mathbb{N}$ and all $H_1, H_2,\dots \in \Phi^N_t$. 
	 Then the sets $\{1, \dots, N_1\}, \{N_1+1, \dots, N_2\}$ yield a partition of $\{1, \dots, N \}$ corresponding to specific subcollections of the random variables $\{X_1, \dots, X_N\}$. 
	 It will turn out to be easier to think about these partitions instead of properties of $\muempirical$ directly. 
	 Of course there will be more than one partition of $\{1, \dots, N\}$ corresponding to \eqref{eq:blabla} in general, but each of these partitions will be distinct, so that we can easily consider them one at a time.
	More formally, we fix such a family $(A_G)_{G}$ and denote by $\mathcal{I}$ the set of all partitions $(I_G)_G$ of $\{1,\dots,N\}$ such that $|I_G|=L_G$ for every $G$.
	We use the shorthand notation $X^I_{1:t}=\{(X^n_1,\dots,X^n_t) : n\in I\}$ for subsets $I\subset \{1,\dots,N\}$.
	Similarly $X_{t+1}^I:=\{ X^n_{t+1} : n\in I\}$.
	
	\begin{enumerate}[(a)]
	\item
	Fix a partition $(I_G)_G$.
	We first claim that 
	\begin{align}
	\label{eq:mu.G.indep.partition}
	P\Big[ \muempirical[G]\in A_G \text{ f.a.\ } G  \Big| X_{1:t}^{I_H}\subset H \text{ f.a.\ } H \Big]
	=\prod_G P\Big[ \widehat{\mu_G}^{L_G}\in A_G \Big]. 
	\end{align}
	To see this, we realize that on the set $\{ X^{I_H}_{1:t}\subset H \text{ f.a.\ } H\}$ we know exactly which of the $(X_{t+1}^1, \dots, X_{t+1}^N)$ play a role for the definition of $\muempirical[G]$. Indeed, we 
	 note that for all $G$ one has
	\begin{align}
	\label{eq:rep.mu.on.calG}
	\muempirical[G]= \frac{1}{|I_G|}\sum_{n\in I_G} \delta_{X^n_{t+1}}
	\quad\text{on }\{ X^{I_H}_{1:t}\subset H \text{ f.a.\ } H\} .
	\end{align}
	The advantage of representation \eqref{eq:rep.mu.on.calG} is that the dependence of $\muempirical[G]$ on $X^n_1,\dots,X^n_t$ is gone and $\muempirical[G]$ depends solely on $X^{I_G}_{t+1}$.
	As the $(X^n)_n$ are i.i.d.\ and the $I_G$ are disjoint, the pairs of random variables 
	\[\Big( X^{I_G}_{1:t},\frac{1}{|I_G|}\sum_{n\in I_G} \delta_{X^n_{t+1}} \Big)_G
		\quad\text{are independent under } P.\]
	The definition of conditional expectations therefore implies that 
	\begin{align}
	\label{eq:indep.products}
	\begin{split}
	&P\Big[ \muempirical[G]\in A_G \text{ f.a.\ } G  \Big| X^{I_H}_{1:t}\subset H \text{ f.a.\ } H \Big]\\
	&=\frac{\prod_G P\big[ \frac{1}{|I_G|}\sum_{n\in I_G} \delta_{X^n_{t+1}} \in A_G \text{ and } X^{I_G}_{1:t}\subset G \big]}{\prod_G P[X^{I_G}_{1:t}\subset G ]}\\
	&=\prod_G P\Big[ \frac{1}{|I_G|}\sum_{n\in I_G} \delta_{X^n_{t+1}} \in A_G \Big| X^{I_G}_{1:t}\subset G \Big].
	\end{split}
	\end{align}
	Further, for every fixed $G$, given $\{X^{I_G}_{1:t}\subset G\}$, the family $X^{I_G}_{t+1}$ is independent with each $X^n_{t+1}$ being distributed according to $\mu_G$.
	Therefore, given $\{X^{I_G}_{1:t}\subset G\}$, the distribution of $\frac{1}{|I_G|}\sum_{n\in I_G} \delta_{X^n_{t+1}} $ equals the distribution of the empirical measure of $\mu_G$ with sample size $|I_G|=L_G$.
	We conclude that
	\[P\Big[ \frac{1}{|I_G|}\sum_{n\in I_G} \delta_{X^n_{t+1}} \in A_G \Big| X^{I_G}_{1:t}\subset G \Big]
	= P\Big[ \widehat{\mu_G}^{L_G}\in A_G \Big]. \]
	Plugging this equality into \eqref{eq:indep.products} yields exactly \eqref{eq:mu.G.indep.partition}.
	
	\item
	We proceed to prove \eqref{eq:mu.G.indep}.
	As $\{N\muempirical(H)=L_H \text{ f.a.\ } H \}$ is the disjoint union of $\{X^{I_H}\subset H \text{ f.a.\ } H\}$ over $(I_H)_H\in\mathcal{I}$, we deduce that
	\begin{align*}
	&P\Big[ \muempirical[G]\in A_G \text{ f.a.\ } G \Big| N\muempirical(H)=L_H \text{ f.a.\ } H  \Big] \\
	&=\sum_{(I_H)_H\in\mathcal{I}} P\Big[ \muempirical[G]\in A_G \text{ f.a.\ } G \Big| X^{I_H}_{1:t}\subset H \text{ f.a.\ } H \Big]
	\frac{P[X^{I_H}_{1:t}\subset H \text{ f.a.\ } H ] }{P[ N\muempirical(H)=L_H \text{ f.a.\ } H]  }
	\end{align*}
	By \eqref{eq:mu.G.indep.partition}, the first term inside the sum is equal to the product of $\Pi_G P[\widehat{\mu_G}^{L_G}\in A_G]$ and in particular does not depend on the choice of $(I_H)_H$.
	The sum over the fractions equals 1, which shows \eqref{eq:mu.G.indep} and thus completes the proof.
	\qedhere
	\end{enumerate}
\end{proof}

\begin{lemma}
\label{lem:conditional.rate.expectation}
	The following hold.
	\begin{enumerate}[(i)]
	\item
	We have 
	\[E[\mathcal{W}(\mu_1,\muempirical[1])]
	\leq C R\Big( \frac{N}{ N^{rd(T-1)}} \Big)\]
	for all $N\geq 1$.	
	\item
	For every $1\leq t\leq T-1$ we have
	\[ E\Big[\sum_{G\in \Phi^N_t} \muempirical(G) \mathcal{W}(\mu_G,\muempirical[G])\Big| \mathcal{G}_t^N \Big]
	\leq C R\Big( \frac{N}{ N^{rd(T-1)}} \Big) \]
	almost surely for all $N\geq 1$.
	\end{enumerate}
\end{lemma}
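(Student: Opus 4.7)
The plan is to reduce both parts to the classical Fournier--Guillin rate for the empirical measure on the unit cube $[0,1]^d$, which states that for any Borel probability $\rho$ on $[0,1]^d$ and its $L$-sample empirical measure $\widehat\rho^L$ one has
\[ E\big[\mathcal W(\rho,\widehat\rho^L)\big]\leq C\,R(L) \]
for some constant $C$ depending only on $d$ (and with the convention $0R(0)=0$ handling the trivial case $L=0$). Part (i) is then essentially immediate: $\widehat{\mu}^N_1$ is by definition the $N$-sample empirical measure of $\mu_1$ on $[0,1]^d$, so $E[\mathcal W(\mu_1,\widehat{\mu}^N_1)]\leq C R(N)$, and since $R$ is non-increasing and $N\geq N/N^{rd(T-1)}$ (recall $rd(T-1)\geq 0$), this bound is at most $CR(N/N^{rd(T-1)})$.

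For part (ii), I would first bring $\mathcal G^N_t$ into play via Lemma \ref{lem:ingredients.indep}: conditionally on $\mathcal G^N_t$, each $\widehat{\mu}^N_G$ has the same law as $\widehat{\mu_G}^{L_G}$, where $L_G:=N\widehat{\mu}^N(G)$ is $\mathcal G^N_t$-measurable. Hence the Fournier--Guillin bound, applied to $\mu_G$ on $[0,1]^d$, gives
\[ E\big[\mathcal W(\mu_G,\widehat{\mu}^N_G)\,\big|\,\mathcal G^N_t\big]\leq C\,R(L_G) \qquad\text{a.s.} \]
for every $G\in\Phi^N_t$. Summing and using $\widehat{\mu}^N(G)=L_G/N$,
\[ E\Big[\sum_{G\in\Phi^N_t}\widehat{\mu}^N(G)\,\mathcal W(\mu_G,\widehat{\mu}^N_G)\,\Big|\,\mathcal G^N_t\Big]\leq \frac{C}{N}\sum_{G\in\Phi^N_t} L_G\,R(L_G). \]
(The terms with $L_G=0$ contribute $0$ by the convention $0R(0)=0$.)

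The key step is now Jensen's inequality applied to the concave function $u\mapsto u R(u)$ (this is precisely the reason why $R$ was built with $\log(u+3)$ rather than $\log(u+1)$, as explicitly noted by the authors). Since $\sum_G L_G = N$, writing $M:=|\Phi^N_t|$ and $\bar L:=N/M$,
\[ \frac{1}{M}\sum_{G\in\Phi^N_t} L_G R(L_G)\leq \bar L\,R(\bar L), \]
so $\sum_G L_G R(L_G)\leq N\,R(N/M)$. Plugging this back yields the conditional bound $C\,R(N/M)$. Finally, $M=|\Phi^N_t|\leq N^{rdt}\leq N^{rd(T-1)}$ for $t\leq T-1$, and monotonicity of $R$ gives $R(N/M)\leq R(N/N^{rd(T-1)})$, which is the required estimate.

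The only subtle point I foresee is the bookkeeping around $L_G=0$: one needs to check that the $\delta_0$--convention of Definition \ref{def:adapted.empirical.measure} and the convention $0R(0)=0$ interact correctly, so that the vanishing prefactor $\widehat{\mu}^N(G)$ really kills the $R(0)=+\infty$ singularity before Jensen's inequality is invoked. Apart from that, the argument is a routine conditioning followed by Fournier--Guillin and one application of Jensen.
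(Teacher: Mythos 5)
Your proof is correct and follows essentially the same route as the paper: reduce to the Fournier--Guillin rate via Lemma \ref{lem:ingredients.indep}, then apply Jensen's inequality to the concave function $u\mapsto uR(u)$ with the constraint $\sum_G L_G = N$, and finally bound $|\Phi^N_t|\leq N^{rd(T-1)}$ using monotonicity of $R$. The bookkeeping point you flag about $L_G=0$ is handled in the paper exactly as you suggest, via the conventions $0R(0)=0$ and $\widehat{\mu}^N_G=\delta_0$ when the sample is empty.
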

\begin{proof}
\hfill
	\begin{enumerate}[(i)]
	\item
	As $\muempirical[1]$ is the empirical measure of $\mu_1$ with $N$ observations,  \cite[Theorem 1]{fournier2015rate} implies that $E[\mathcal{W}( \mu_1,\muempirical[1] )]\leq C R( N)$ for all $N\geq 1$. 
	The claim follows as $R$ is decreasing.
	\item
	For the second claim, fix $1\leq t\leq T-1$.
	Lemma \ref{lem:ingredients.indep} implies that, conditionally on $\mathcal{G}_t^N$, the distribution of each $\muempirical[G]$ equals the distribution of the empirical measure of $\mu_G$ with sample size $N\muempirical(G)$.
	Therefore, estimating the mean speed of convergence of the classical empirical measure by e.g.\ \cite[Theorem 1]{fournier2015rate}, one has that
	\begin{align}
	\label{eq:rates.conditionally.on.muG}
	E\big[\mathcal{W}(\mu_G,\muempirical[G]) \big| \mathcal{G}_t^N \big]
	\leq C R(N\muempirical(G))
	\end{align}
	almost surely for all $N\geq 1$. 
	
	Summing \eqref{eq:rates.conditionally.on.muG} over $G\in\Phi^N_t$ yields
	\begin{align*}
	&E\Big[\sum_{G\in \Phi^N_t} \muempirical(G) \mathcal{W}(\mu_G,\muempirical[G])\Big|\mathcal{G}_t^N \Big] 
	\leq C \sum_{G\in \Phi^N_t}	\muempirical(G)R\Big(N \muempirical(G)\Big) \\
	&=C  \frac{|\Phi^N_t|}{N} \cdot \sum_{G\in \Phi^N_t}	\frac{1}{|\Phi^N_t|} \Big( N\muempirical(G)\Big) R\Big(N \muempirical(G)\Big).
	\end{align*}
	Now, concavity of $u\mapsto uR(u)$ implies that the latter term is smaller than
	\begin{align*}
	 C \frac{|\Phi^N_t|}{N} \cdot  \Big(\sum_{G\in \Phi^N_t}\frac{N \muempirical(G)}{|\Phi^N_t|} \Big) R\Big(\sum_{G\in \Phi^N_t}\frac{N \muempirical(G)}{ |\Phi^N_t| }\Big) \Big)
	=C  R\Big(\frac{N}{ |\Phi^N_t| }\Big).
	\end{align*}
	Finally, using that $R$ is decreasing, one obtains
	\[R\Big( \frac{N}{|\Phi^N_t|} \Big)
	\leq R\Big( \frac{N}{|\Phi^N_{T-1}|} \Big)
	\leq R\Big( \frac{N}{ N^{rd(T-1)}} \Big).\]
	This completes the proof.
	\qedhere
	\end{enumerate}
\end{proof}

\begin{proof}[Proof of Theorem \ref{thm:rates.unit.cube}]
	By Lemma \ref{lem:aw.estimate.lipschitz.kernel} one has that
	\[ E\Big[\mathcal{AW}(\mu,\mucausal)\Big]
	\leq C  E[\mathcal{W}(\mu_1, \mucausal[1] )] + C\sum_{t=1}^{T-1} E\Big[\int \mathcal{W}(\mu_{y_1,\dots,y_t}, \mucausal[y_1, \dots, y_t] )\,\mucausal(d y)\Big].\] 
	Combining Lemma \ref{lem:integral.kernels.leq.averaged.kernel} and Lemma \ref{lem:conditional.rate.expectation} together with the tower property shows that 
	\[ E\Big[\mathcal{AW}(\mu,\mucausal)\Big]
	\leq C\Big( \frac{1}{N^r} + R\Big(\frac{N}{N^{rd(T-1)}}\Big) \Big)\]
	for every $N\geq 1$. 	
	
	Lastly, by definition of $r$, one has that
	\[ \frac{N}{N^{rd(T-1)}}
	=\begin{cases}
	N^{2/(T+1)} &\text{if } d=1,\\
	N^{1/T} &\text{if } d\geq 2.
	\end{cases}\]
	Recalling the definition of $R$ in \eqref{eq:def.R} (and noting that $\log(N^{1/T} +
3) \leq C \log(N + 1)$ for all $N$ in case $d = 2$) yields the claim.
\end{proof}

\section{Proof of Theorem \ref{thm:deviation}} \label{sec:deviation}
\label{seq:proof.dev}

The proof uses the following basic result for subgausian random variables, where we use the convention $x/0=\infty$ for $x>0$ and $0\cdot \infty =0$.

\begin{lemma}
\label{lem:subgaussian}
	For an integrable zero mean random variable $Y$ and $\sigma\geq 0$ consider the following:
	\begin{enumerate}[(i)]
	\item
	$E[\exp(tY)]\leq \exp(t^2\sigma^2/2)$ for all $t\in\mathbb{R}$.
	\item
	$P[|Y|\geq t]\leq 2 \exp(-t^2/(2\sigma^2))$ for all $t\in\mathbb{R}_+$.	
	\end{enumerate}
	Then (i) implies (ii).
	Moreover, (ii) implies (i) with $\sigma^2$ replaced by $8\sigma^2$ in (i).
\end{lemma}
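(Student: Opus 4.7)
The plan is to treat the two implications independently, using classical sub-Gaussian tools.

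For (i) $\Rightarrow$ (ii), I would use the Chernoff bound: for $t\geq 0$ and $s>0$, Markov's inequality combined with (i) yields $P[Y\geq t]\leq e^{-st}E[e^{sY}]\leq \exp(s^2\sigma^2/2-st)$, and optimizing by taking $s=t/\sigma^2$ gives $P[Y\geq t]\leq \exp(-t^2/(2\sigma^2))$. Since (i) is symmetric under $Y\mapsto -Y$ (its right-hand side depends only on $t^2$), the same estimate applies to $P[Y\leq -t]$, and a union bound yields the factor of $2$ in (ii). The degenerate cases $\sigma=0$ and $t=0$ are handled via the conventions stated above the lemma.

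For (ii) $\Rightarrow$ (i), the plan is to combine tail-based moment estimates with the Taylor expansion of the MGF. First, the layer-cake identity and (ii) give
\[ E[|Y|^p]=\int_0^\infty p u^{p-1}P[|Y|\geq u]\,du\leq 2p\int_0^\infty u^{p-1}e^{-u^2/(2\sigma^2)}\,du=p(2\sigma^2)^{p/2}\Gamma(p/2), \]
which for even $p=2k$ simplifies (using $\Gamma(k)=(k-1)!$) to $E[Y^{2k}]\leq 2\cdot k!\cdot(2\sigma^2)^k$. Since $E[Y]=0$, we have $E[e^{tY}]=1+\sum_{k\geq 2}t^kE[Y^k]/k!$, and I would split this series into even and odd indices. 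For the even indices, using the elementary inequality $(2k)!\geq 2^k(k!)^2$ (equivalent to $\binom{2k}{k}\geq 2^k$), the contribution is at most $2(e^{t^2\sigma^2}-1)$. For the odd indices, Cauchy--Schwarz gives $|E[Y^{2k+1}]|\leq (E[Y^{2k}]E[Y^{2k+2}])^{1/2}$; after algebraic simplification and the estimate $\sqrt{2(k+1)}/(2k+1)\leq 1$ (valid for $k\geq 1$, while the $k=0$ term vanishes by centering), the odd part is bounded by $2|t|\sigma(e^{t^2\sigma^2}-1)$. Combining,
\[ E[e^{tY}]\leq 1+2(1+|t|\sigma)(e^{t^2\sigma^2}-1). \]

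The last step is to verify the elementary scalar inequality $1+2(1+\sqrt u)(e^u-1)\leq e^{4u}$ for all $u=t^2\sigma^2\geq 0$, which then gives (i) with $\sigma^2$ replaced by $8\sigma^2$. Factoring $e^{4u}-1=(e^u-1)(e^{3u}+e^{2u}+e^u+1)$, this reduces to $e^{3u}+e^{2u}+e^u+1\geq 2(1+\sqrt u)$, which follows from $e^u\geq 1$ together with $e^{3u}\geq 1+3u\geq 2\sqrt u-1$. The main obstacle is the bookkeeping in the odd-moment step, where the Cauchy--Schwarz factor of $\sqrt{k+1}$ must be absorbed into the combinatorial coefficients; this is responsible for the non-optimal constant $8$ (the sharp sub-Gaussian constant would be $2$), which is nevertheless sufficient for the deviation bounds developed in Section \ref{seq:proof.dev}.
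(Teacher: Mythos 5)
Your proof is correct and complete. Note that the paper itself does not supply a proof for this lemma; it merely cites \cite[Proposition 2.5.2]{Vershynin:2018td}, so there is no in-paper argument to compare against -- your contribution is to spell out the standard textbook argument that underlies that reference. The direction (i)$\Rightarrow$(ii) via Chernoff's bound with the optimizer $s=t/\sigma^2$ and symmetrization is exactly right. For (ii)$\Rightarrow$(i), the chain of steps -- layer-cake to get $E[|Y|^p]\leq p(2\sigma^2)^{p/2}\Gamma(p/2)$, hence $E[Y^{2k}]\leq 2\cdot k!\,(2\sigma^2)^k$; reduction of the even-index series via $(2k)!\geq 2^k(k!)^2$; Cauchy--Schwarz on the odd indices leaving the residual factor $\sqrt{2(k+1)}/(2k+1)\leq 1$ for $k\geq1$ (the $p=1$ term being absent since $E[Y]=0$); and the scalar verification $1+2(1+\sqrt{u})(e^u-1)\leq e^{4u}$ by factoring $e^{4u}-1=(e^u-1)(e^{3u}+e^{2u}+e^u+1)$ and using $e^{3u}\geq 1+3u\geq 2\sqrt{u}-1$ -- all checks out and delivers the stated parameter $8\sigma^2$. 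One small point worth making explicit: the term-by-term passage from $E[e^{tY}]$ to $1+\sum_{p\geq2}t^pE[Y^p]/p!$ and the subsequent bound by $1+\sum_{p\geq2}|t|^p E[|Y|^p]/p!$ rely on a Fubini/Tonelli interchange, which is legitimate precisely because your moment bound (with Stirling) shows $E[e^{|tY|}]=\sum_p|t|^p E[|Y|^p]/p!<\infty$; it is worth saying so.
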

\begin{proof}
	See, for instance, \cite[Proposition 2.5.2]{Vershynin:2018td}.
\end{proof}

A zero mean random variable $Y$ which satisfies part (i) of Lemma \ref{lem:subgaussian} is called subgaussian with parameter $\sigma^2$.
From the definition it immediately follows that if $Y_1,\dots,Y_n$ are independent $\sigma_k^2$-subgaussian random variables, then $\sum_{k=1}^n Y_k$ is again subgaussian with parameter $\sum_{k=1}^n\sigma_k^2$.
In particular, one obtains Hoeffding's inequality
\[ P\Big[ \Big|\sum_{k=1}^n Y_k\Big|\geq t\Big]
\leq 2\exp\Big( \frac{-ct^2}{\sum_{k=1}^n\sigma_k^2}\Big) \quad\text{for all }t\geq 0.\]
The reason why subgaussian random variables are of interest in the proof of Theorem \ref{thm:deviation} is the following:

%\begin{lemma}
\begin{lemma}
\label{lem:ingredients.subgauss}
	The following hold.
	\begin{enumerate}[(a)]
	\item
	The random variable
	\[ \mathcal{W}(\mu_1,\muempirical[1]) - E[\mathcal{W}(\mu_1,\muempirical[1])]\]
	is subgaussian with parameter $C/N$	for all $N\geq 1$.
	\item
	Let $1\leq t\leq T-1$ and $G\in\Phi^N_t$. 
	Then, conditionally on $\mathcal{G}_t^N$, the random variable
	\begin{align*}
%	\label{eq:def.muG.W}
	\muempirical(G)\Big( \mathcal{W}(\mu_G,\muempirical[G]) - E\big[\mathcal{W}(\mu_G,\muempirical[G])\big|\mathcal{G}_t^N \big] \Big)
	\end{align*}
	is subgaussian with parameter $C\muempirical(G)/N$ for all $N\geq 1$.
	\end{enumerate}
\end{lemma}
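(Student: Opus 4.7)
The natural tool for both statements is McDiarmid's bounded differences inequality, which exactly produces the subgaussian tail behaviour asserted by part (i) of Lemma \ref{lem:subgaussian}. The plan is to apply it once for part (a) on the unconditional level, and then apply it conditionally in part (b), exploiting the independence and distributional identification supplied by Lemma \ref{lem:ingredients.indep}.

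For part (a), I would view $\mathcal{W}(\mu_1,\muempirical[1])$ as a function of the i.i.d.\ coordinates $X_1^1,\dots,X_1^N \in [0,1]^d$, setting
\[ f(x_1,\dots,x_N):=\mathcal{W}\Bigl(\mu_1,\tfrac{1}{N}\sum_{n=1}^N \delta_{x_n}\Bigr). \]
If $(x_1,\dots,x_N)$ and $(x_1,\dots,x_{k-1},x_k',x_{k+1},\dots,x_N)$ differ only in one coordinate, then the two empirical measures can be coupled by the identity on all other points, yielding $\mathcal{W}(\tfrac{1}{N}\sum\delta_{x_n},\tfrac{1}{N}\sum \delta_{x_n'})\leq |x_k-x_k'|/N\leq \sqrt d/N$. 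The triangle inequality for $\mathcal{W}$ then gives the bounded-differences estimate $|f(\cdots x_k\cdots)-f(\cdots x_k'\cdots)|\le \sqrt d/N$. McDiarmid's inequality therefore shows that $f(X_1^1,\dots,X_1^N)-E[f(X_1^1,\dots,X_1^N)]$ is subgaussian with parameter $N\cdot(\sqrt d/N)^2/4=C/N$, which is exactly claim (a).

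For part (b), fix $1\le t\le T-1$ and $G\in\Phi_t^N$ and work conditionally on $\mathcal{G}_t^N$. By Lemma \ref{lem:ingredients.indep}(ii), the conditional distribution of $\muempirical[G]$ coincides with that of $\widehat{\mu_G}^{L_G}$ where $L_G:=N\muempirical(G)$ is $\mathcal{G}_t^N$-measurable. On $\{L_G\ge 1\}$ the very same McDiarmid argument as above, applied to $L_G$ i.i.d.\ samples from $\mu_G$ on $[0,1]^d$, shows that
\[ Y_G:=\mathcal{W}(\mu_G,\muempirical[G])-E\bigl[\mathcal{W}(\mu_G,\muempirical[G])\,\big|\,\mathcal{G}_t^N\bigr] \]
is conditionally subgaussian with parameter $C/L_G$. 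Since scaling a $\sigma^2$-subgaussian random variable by a constant $a$ (here $a=\muempirical(G)$, which is $\mathcal{G}_t^N$-measurable) produces a $a^2\sigma^2$-subgaussian random variable, $\muempirical(G)Y_G$ is conditionally subgaussian with parameter
\[ \muempirical(G)^2\cdot \frac{C}{L_G}=\muempirical(G)^2\cdot\frac{C}{N\muempirical(G)}=\frac{C\muempirical(G)}{N}. \]
On the complementary event $\{L_G=0\}$ we have $\muempirical(G)=0$, so the whole expression $\muempirical(G)Y_G$ vanishes almost surely and is trivially subgaussian with parameter $0=C\muempirical(G)/N$, covering the degenerate case that required the convention $x/0=\infty$, $0\cdot\infty=0$ from Lemma \ref{lem:subgaussian}.

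The only slightly delicate point is checking that McDiarmid's inequality is legitimately applicable in the conditional setting of part (b): this is why Lemma \ref{lem:ingredients.indep} is invoked, as it reduces the conditional law to that of a genuine i.i.d.\ empirical measure with a deterministic (given $\mathcal{G}_t^N$) sample size. Beyond that, both parts are essentially immediate once one spots that $\mathcal{W}(\mu,\cdot)$ is $1$-Lipschitz in its second argument with respect to $\mathcal{W}$ and that changing a single atom of a size-$N$ empirical measure moves it by at most $\sqrt d/N$ in Wasserstein distance.
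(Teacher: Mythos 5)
Your proposal is correct and follows essentially the same argument as the paper: McDiarmid's bounded-differences inequality for part (a) (the paper phrases it for a general $\nu$ and sample size $L$ so it can be reused), and for part (b) the reduction via Lemma \ref{lem:ingredients.indep} to the conditional empirical measure of $\mu_G$ with deterministic sample size $N\muempirical(G)$, followed by the scaling property of subgaussian variables. Your additional remarks — the explicit $\sqrt d/N$ bounded-difference bound from the $1$-Lipschitz property of $\mathcal{W}(\mu,\cdot)$ and the separate treatment of the event $\{N\muempirical(G)=0\}$ — merely flesh out details the paper leaves implicit.
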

\begin{proof}
\hfill
	\begin{enumerate}[(a)]
	\item
	More generally than in the statement of the lemma, let $\nu\in\mathrm{Prob}([0,1]^d)$ and $L\in\mathbb{N}$ be arbitrary.
	Applying McDiarmid's inequality to the function $([0,1]^d)^L\ni x\mapsto \mathcal{W}(\nu,1/L\sum_{n=1}^L \delta_{x^n})$ shows that the random variable
	\[ \mathcal{W}(\nu,\widehat{\nu}^L) - E[\mathcal{W}(\nu,\widehat{\nu}^L)]\]
	is subgaussian with parameter $C/L$ (where $\widehat{\nu}^L$ denotes the empirical measure of $\nu$ with sample size $L$).
	This in particular implies point (a) of this lemma.
	\item
	Conditionally on $\mathcal{G}_t^N$, the distribution of $\muempirical[G]$ is the same as the distribution of the empirical measure of $\mu_G$ with $N\muempirical(G)$ observations, see Lemma \ref{lem:ingredients.indep}.
	By (the proof of) part (a) of this lemma this implies that, conditionally on $\mathcal{G}_t^N$, the random variable 
	\[\mathcal{W}(\mu_G,\muempirical[G]) - E\big[\mathcal{W}(\mu_G,\muempirical[G])\big|\mathcal{G}_t^N\big] \]
	is subgaussian with parameter $C/(N\muempirical(G))$.
	Multiplying a $\sigma^2$-subgaussian random variable by a constant $a\geq 0$ yields a $\sigma^2a^2$-subgaussian random variable.
	This completes the proof.	
	\qedhere
	\end{enumerate}
\end{proof}

\begin{lemma}
\label{lem:concentration.sum.mu.G}
	There is a constant $c>0$ such that the following hold.
	\begin{enumerate}[(a)]
	\item
	We have
	\[ P\Big[ \big| \mathcal{W}(\mu_1,\muempirical[1]) - E[\mathcal{W}(\mu_1,\muempirical[1]) ]\big|\geq \varepsilon \Big]
\leq 2\exp(-c\varepsilon^2 N)\]
	for all $\varepsilon\geq 0$ and all $N\geq 1$.
	\item
	We have
	\[ P\Big[ \Big|\sum_{G\in\Phi^N_t} \muempirical(G)\Big( \mathcal{W}(\mu_G,\muempirical[G]) - E[\mathcal{W}(\mu_G,\muempirical[G])| \mathcal{G}_t^N ]\Big) \Big|\geq \varepsilon \Big| \mathcal{G}_t^N \Big]
\leq 2\exp(-c\varepsilon^2 N)\]
	almost surely for all $\varepsilon\geq 0$, all $N\geq 1$, and all $1\leq t\leq T-1$.
	\end{enumerate}
\end{lemma}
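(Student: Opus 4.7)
The plan is to treat both parts uniformly: in each case, I will establish subgaussianity of the relevant (centered) quantity with parameter of order $1/N$ and then invoke the (i) $\Rightarrow$ (ii) implication in Lemma \ref{lem:subgaussian}.

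Part (a) is essentially immediate from the preceding work. Lemma \ref{lem:ingredients.subgauss}(a) already identifies $\mathcal{W}(\mu_1,\muempirical[1]) - E[\mathcal{W}(\mu_1,\muempirical[1])]$ as centered and subgaussian with parameter $C/N$, so Lemma \ref{lem:subgaussian} yields the stated deviation bound with $c = 1/(2C)$.

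For part (b), the strategy is to work conditionally on $\mathcal{G}_t^N$, show that the sum in question is conditionally subgaussian with parameter of order $1/N$, and then apply Lemma \ref{lem:subgaussian} in its conditional form. Setting
\[ Y_G := \muempirical(G)\Big(\mathcal{W}(\mu_G,\muempirical[G]) - E[\mathcal{W}(\mu_G,\muempirical[G]) \mid \mathcal{G}_t^N]\Big) \]
for $G \in \Phi_t^N$, two ingredients are needed. First, by Lemma \ref{lem:ingredients.subgauss}(b), each $Y_G$ is conditionally centered and subgaussian with parameter $C\muempirical(G)/N$. Second, by Lemma \ref{lem:ingredients.indep}(i), the family $\{\muempirical[G] : G \in \Phi_t^N\}$ is conditionally independent given $\mathcal{G}_t^N$; since $\muempirical(G)$ and $E[\mathcal{W}(\mu_G,\muempirical[G]) \mid \mathcal{G}_t^N]$ are $\mathcal{G}_t^N$-measurable, this transfers to conditional independence of the family $(Y_G)_G$.

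Combining these, the conditional moment generating function factorises and, almost surely,
\[ E\Big[\exp\Big(t \sum_G Y_G\Big) \,\Big|\, \mathcal{G}_t^N \Big]
= \prod_{G \in \Phi_t^N} E\big[\exp(tY_G) \,\big|\, \mathcal{G}_t^N\big]
\leq \exp\Big(\frac{t^2}{2} \sum_{G \in \Phi_t^N} \frac{C\muempirical(G)}{N}\Big)
= \exp\Big(\frac{Ct^2}{2N}\Big), \]
where we used $\sum_{G \in \Phi_t^N} \muempirical(G) = 1$. Hence $\sum_G Y_G$ is conditionally subgaussian with parameter $C/N$, and a final application of Lemma \ref{lem:subgaussian}, carried out conditionally on $\mathcal{G}_t^N$, delivers the desired bound with $c = 1/(2C)$. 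I do not expect any real obstacle; the only subtlety worth flagging is the conditional analogue of the standard "sum of independent subgaussians is subgaussian" argument, which passes through verbatim once one writes the MGF identity conditionally on $\mathcal{G}_t^N$.
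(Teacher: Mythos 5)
Your argument is correct and follows the paper's own proof essentially verbatim: part (a) is an immediate combination of Lemmas \ref{lem:ingredients.subgauss}(a) and \ref{lem:subgaussian}, and part (b) combines Lemmas \ref{lem:ingredients.indep}, \ref{lem:ingredients.subgauss}(b), the additivity of subgaussian parameters under independence (here in conditional form), and Lemma \ref{lem:subgaussian}. The only difference is that you spell out the conditional moment generating function factorisation explicitly, whereas the paper appeals to the stated fact that sums of independent subgaussians are subgaussian with summed parameters.
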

\begin{proof}
\hfill
	\begin{enumerate}[(a)]
	\item
	The proof follows immediately from Lemma \ref{lem:subgaussian} and Lemma \ref{lem:ingredients.subgauss}.
	\item
	For later reference, define 
	\begin{align}
	\label{eq:def.Delta.t}
	\Delta_{t}^N&:=\sum_{G\in\Phi^N_t} \Delta_{t,G}^N,\quad\text{where}\\
	\nonumber 
	\Delta_{t,G}^N&:=\muempirical(G)\Big( \mathcal{W}(\mu_G,\muempirical[G]) - E[\mathcal{W}(\mu_G,\muempirical[G])|\mathcal{G}_t^N]\Big)
	\end{align}
	for every $G\in\Phi^N_t$.
	Conditionally on $\mathcal{G}_t^N$, Lemma \ref{lem:ingredients.indep} and Lemma \ref{lem:ingredients.subgauss} imply that $\{\Delta_{t,G}^N: G\in\Phi^N_t\}$ is an independent family of $C\muempirical(G)/N$-subgaussian random variables.
	Hence, conditionally on $\mathcal{G}_t^N$, the random variable $\Delta_{t}^N$	is subgaussian with parameter $\sum_{G\in\Phi^N_t} C\muempirical(G)/N = C/N$.
	We again use Lemma \ref{lem:subgaussian} to conclude the proof.
	\qedhere	
\end{enumerate}
\end{proof}

\begin{proof}[Proof of Theorem \ref{thm:deviation}]
	By Lemma \ref{lem:aw.estimate.lipschitz.kernel} and Lemma \ref{lem:integral.kernels.leq.averaged.kernel} one has that
	\begin{align*}
	\mathcal{AW}(\mu,\mucausal)
	&\leq C\Big( \frac{1}{N^r}+ \mathcal{W}(\mu_1,\muempirical[1])  +\sum_{t=1}^{T-1} \sum_{G\in\Phi^N_t} \muempirical(G) \mathcal{W}(\mu_G,\muempirical[G]) \Big).
%	&= \frac{C}{N^r} +C\sum_{t=1}^{T-1} Y^{t,N} + C\sum_{t=1}^{T-1}\sum_{G\in\Phi^N_t} \muempirical[G] E[\mathcal{W}(\mu_G,\muempirical[G])|\mathcal{G}_t^N]
	\end{align*}
	Recalling the definition of $\Delta_{t}^N$ given in  \eqref{eq:def.Delta.t} and setting
	\begin{align}
	\label{eq:def.Delta.0}
	\Delta_0^N&:=\mathcal{W}(\mu_1,\muempirical[1])- E[\mathcal{W}(\mu_1,\muempirical[1])], \\
	\nonumber
	S^N&:=E[\mathcal{W}(\mu_1,\muempirical[1])]+ \sum_{t=1}^{T-1}\sum_{G\in\Phi^N_t} \muempirical(G) E[\mathcal{W}(\mu_G,\muempirical[G])|\mathcal{G}_t^N]
	\end{align}
	for every $N\geq 1$, we can write
	\begin{align*}
	\mathcal{AW}(\mu,\mucausal)
	&\leq C\Big( \frac{1}{N^r} + \sum_{t=0}^{T-1} \Delta_t^N  + S^N \Big).
	\end{align*}
	By Lemma \ref{lem:conditional.rate.expectation} one has that $S^N \leq C R( N/N^{rd(T-1)} )$ almost surely for every $N\geq 1$.
	Recalling that $R(N/N^{rd(T-1})\leq C \mathop{\mathrm{rate}}(N)$ 	and that $N^{-r}\leq\mathop{\mathrm{rate}}(N)$ (with equality for dimension $d\neq 2$), we arrive at
	\begin{align}
	\label{eq:AW.smaller.three.terms}
	\begin{split}
	\mathcal{AW}(\mu,\mucausal)
%	&\leq C\frac{1}{N^r} + C\sum_{t=1}^{T-1} Y^{t,N} + C R\Big( \frac{N}{ N^{rd(T-1)}} \Big)\\
	&\leq  C\Big( \sum_{t=0}^{T-1} \Delta_t^N  + \mathop{\mathrm{rate}}(N)\Big).
	\end{split}
	\end{align}
	
	Finally, Lemma \ref{lem:concentration.sum.mu.G} (and the tower property) imply that $P[|\Delta_t^N|\geq \varepsilon]\leq 2\exp(-cN\varepsilon^2) $ for all $0\leq t\leq T-1$, all $\varepsilon>0$, and all $N\geq 1$.
	Therefore a union bound shows that
	\begin{align*}
	P\Big[ \mathcal{AW}(\mu,\mucausal)  \geq C\mathop{\mathrm{rate}}(N) + \varepsilon \Big]
	&\leq P\Big[\sum_{t=0}^{T-1} \Delta_t^N  \geq\frac{\varepsilon}{C} \Big] \\
	&\leq 2T\exp( -c N \varepsilon^2)
	\end{align*}
	for all $N\geq 1$ and all $\varepsilon>0$, where $c>0$ is some new (small) constant.
	This completes the proof.
\end{proof}

\begin{proof}[Proof of Corollary \ref{cor:deviation.asymptotic}]
For fixed $\varepsilon>0$, choose $N_0(\varepsilon)$ so that $C\mathop{\mathrm{rate}}(N)\leq \varepsilon$ for all $N\geq N_0(\varepsilon)$ and apply Theorem \ref{thm:deviation}.
\end{proof}

\section{Proof of Theorem \ref{thm:almost.sure.convergence}} 
\label{sec:proof.as.convergence}

We start by proving almost sure convergence of $\mathcal{AW}(\mu,\mucausal)$ to zero under the additional assumption that the kernels of $\mu$ admit a continuous version\footnote{That is, a version such that for all $1\leq t\leq T-1$ the map $([0,1]^{d})^t\ni(x_1,\dots,x_t)\mapsto \mu_{x_1,\dots,x_t}\in \mathrm{Prob}([0,1]^d)$  is continuous, the domain equipped with the associated Euclidean topology and the image with the weak topology.}.
Under this assumption, we can make use of the previous results and conclude almost sure convergence from the deviation inequality and a Borel-Cantelli argument.
At the end of this section we show how this restriction can be removed.

\begin{lemma}
\label{lem:aw.estimate.continuous}
	Assume that the kernels of $\mu$ are continuous.
	Then, for every $\delta>0$ there is a constant $C(\delta)>0$ such that 
	\[ \mathcal{AW}(\mu,\nu)
	\leq \delta + C(\delta) \mathcal{W}(\mu_1,\nu_1) + C(\delta) \sum_{t=1}^{T-1} \int \mathcal{W}(\mu_{y_1,\dots,y_t}, \nu_{y_1,\dots,y_t} )\,\nu(d y)	  \]
	for every $\nu\in\mathrm{Prob}(([0,1]^d)^T)$.
\end{lemma}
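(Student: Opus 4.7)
The plan is to mimic the backward-induction proof of Lemma~\ref{lem:aw.estimate.lipschitz.kernel}, replacing the Lipschitz estimate on the kernels by a uniform modulus of continuity combined with the trivial diameter bound $\mathcal{W}(\mu_x,\mu_y)\leq \sqrt d$ (both measures being supported on the compact set $[0,1]^d$).

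First, since $[0,1]^d$ is compact, $(\mathrm{Prob}([0,1]^d),\mathcal{W})$ is itself a compact metric space whose topology coincides with the weak topology. Hence each continuous map $(x_1,\dots,x_t)\mapsto \mu_{x_1,\dots,x_t}$ from the compact space $([0,1]^d)^t$ into $(\mathrm{Prob}([0,1]^d),\mathcal{W})$ is automatically \emph{uniformly} continuous. For each $\eta>0$ I would therefore pick $\rho(\eta)>0$ so small that $|x-y|<\rho(\eta)$ implies $\mathcal{W}(\mu_{x_1,\dots,x_t},\mu_{y_1,\dots,y_t})<\eta$ for all $1\leq t\leq T-1$ simultaneously. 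Using $\mathbf{1}_{|x-y|\geq\rho(\eta)}\leq |x-y|/\rho(\eta)$ together with the diameter bound yields the pointwise \emph{Lipschitz-with-additive-slack} estimate
\[
\mathcal{W}(\mu_{x_1,\dots,x_t},\mu_{y_1,\dots,y_t})\leq \eta + L(\eta)\,|(x_1,\dots,x_t)-(y_1,\dots,y_t)|,\qquad L(\eta):=\sqrt{d}/\rho(\eta),
\]
valid for all $(x_1,\dots,x_t),(y_1,\dots,y_t)\in([0,1]^d)^t$.

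The second step is to rerun the backward induction from the proof of Lemma~\ref{lem:aw.estimate.lipschitz.kernel} verbatim, using the above bound in place of the Lipschitz estimate. The only change is that each of the (at most $T$) applications of this bound contributes an extra additive $\eta$, which survives integration against the coupling $\gamma$ simply because $\gamma$ is a probability measure. At the end of the induction one recovers the same right-hand side as in Lemma~\ref{lem:aw.estimate.lipschitz.kernel}, with the Lipschitz constant $L$ replaced by $L(\eta)$, plus an additive error of the form $C_T\eta$ for a constant $C_T$ depending only on $T$. Choosing $\eta:=\delta/C_T$ then yields the claim with $C(\delta):=C\,L(\delta/C_T)$, which is finite because $\rho(\eta)>0$ for every $\eta>0$.

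The only real obstacle is the bookkeeping of how the additive $\eta$ terms propagate through the $T$ induction steps and interact with the multiplicative constants produced by the recursion. This is routine since the multiplicative constants are independent of $\mu$, $\nu$, and $\eta$, and since strict positivity of the modulus $\rho(\eta)$ -- guaranteed by uniform continuity on a compact domain -- is all that is needed to conclude $C(\delta)<\infty$.
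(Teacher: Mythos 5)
Your overall strategy is the right one, and it is essentially the paper's: use uniform continuity on the compact domain to replace the Lipschitz bound on the kernels by the ``Lipschitz-with-additive-slack'' estimate
\[
\mathcal{W}(\mu_{x_{1:t}},\mu_{y_{1:t}})\leq \eta + L(\eta)\,|x_{1:t}-y_{1:t}|,
\]
then re-run the backward induction of Lemma~\ref{lem:aw.estimate.lipschitz.kernel}. Your derivation of this estimate via $\mathbf{1}_{\{|x-y|\geq\rho(\eta)\}}\leq |x-y|/\rho(\eta)$ and the diameter bound is correct and clean, and it makes explicit what the paper leaves implicit.

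However, the bookkeeping claim at the end is not correct as stated. You assert that the additive error accumulating through the $T$ induction steps is of the form $C_T\,\eta$ with $C_T$ depending \emph{only} on $T$, because ``the multiplicative constants are independent of $\mu$, $\nu$, and $\eta$.'' This is false. Tracking the recursion $V_{t+1}\to V_t$ carefully, the coefficient in front of $\mathcal{W}(\mu_{x_{1:t}},\nu_{y_{1:t}})$ at step $t$ is $1+A_{t+1}$, where $A_{t+1}$ is the Lipschitz-type coefficient produced by the earlier steps; in the continuous case $A_{t+1}$ is built out of $L(\eta)$, so it \emph{does} depend on $\eta$ (and on the modulus of $\mu$). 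Concretely, with a single $\eta$ and $T=3$, the accumulated additive error is $\eta + (1+L(\eta))\eta = (2+L(\eta))\eta$, and more generally it is of order $(1+L(\eta))^{T-2}\eta$. Since $L(\eta)=\sqrt{d}/\rho(\eta)$ and a generic continuous modulus gives no control on how fast $\rho(\eta)\to 0$, the quantity $(1+L(\eta))^{T-2}\eta$ need not tend to zero, so choosing $\eta=\delta/C_T$ does not yield the claim for $T\geq 3$.

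The fix is minor but necessary: choose a \emph{different} slack parameter $\eta_t$ at each induction step, working backwards from $t=T-1$ to $t=1$. First fix $\eta_{T-1}$ with $\eta_{T-1}\leq \delta/T$, which determines $A_{T-1}=L(\eta_{T-1})<\infty$; then fix $\eta_{T-2}$ so small that $(1+A_{T-1})\eta_{T-2}\leq \delta/T$, determining $A_{T-2}$; and so on. Each $\eta_t$ can be chosen strictly positive by uniform continuity, so each $L(\eta_t)$ and hence each $A_t$ is finite, and the total accumulated error is $\sum_{t}(1+A_{t+1})\eta_t\leq\delta$. Setting $C(\delta):=\max_t\max(A_t,B_t)$ then gives the statement. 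With this correction your argument matches the paper's for $T=2$ (where there is only one slack term and the issue does not arise) and extends it correctly to general $T$.
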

\begin{proof}
	The proof is similar to the proof of Lemma \ref{lem:aw.estimate.lipschitz.kernel}.
	For (notational) simplicity we spare the induction and restrict to $T=2$; the general case follows just as in Lemma \ref{lem:aw.estimate.lipschitz.kernel}. 
	For $T=2$ the recursive formula of $\mathcal{AW}(\mu, \nu)$ reads 
	\begin{align}
	\label{eq:dyn.prog.AW.2period}
	&\mathcal{AW}(\mu,\nu)
	=\inf_{\gamma\in \mathrm{Cpl}(\mu_{1}, \nu_1)} \int \left [ |x_1-y_1| + \mathcal{W}(\mu_{x_1},\nu_{y_1}) \right ]\gamma(dx_1, dy_1).
	\end{align}
	Now fix $\delta>0$.
	By uniform continuity of $x_1\mapsto \mu_{x_1}$ (as a continuous function with compact domain), there is $C(\delta)>0$ such that 
	\begin{align*}
	\mathcal{W}(\mu_{x_1},\nu_{y_1}) 
	&\leq \mathcal{W}(\mu_{x_1},\mu_{y_1})  + \mathcal{W}(\mu_{y_1},\nu_{y_1})\\
	&\leq \delta + C(\delta)|x_1-y_1| + \mathcal{W}(\mu_{y_1},\nu_{y_1})
	\end{align*}
	for all $x_1,y_1\in[0,1]^d$.
	Plugging this into \eqref{eq:dyn.prog.AW.2period} yields the claim.
\end{proof}

\begin{lemma}
\label{lem:integral.kernels.leq.averaged.kernel.continuous}
	Assume that the kernels of $\mu$ are continuous and let $1\leq t\leq T-1$.
	Then, for every $\delta>0$ there is a number $N_0(\delta)$ such that 
	\begin{align*}
	\int \mathcal{W}(\mu_{y_1,\dots,y_t}, \mucausal[y_1,\dots,y_t])\,\mucausal(dy)
	\leq \delta + C \sum_{G\in \Phi^N_t} \muempirical(G) \mathcal{W}(\mu_G,\muempirical[G])
	\end{align*}
	almost surely for every $N\geq N_0(\delta)$.
\end{lemma}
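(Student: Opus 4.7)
The plan is to follow the proof of Lemma \ref{lem:integral.kernels.leq.averaged.kernel}(ii) line by line, simply replacing the Lipschitz bound on the kernels by the uniform continuity that is automatic for continuous functions on the compact cube (noting that on the compact $[0,1]^d$ the weak topology on $\mathrm{Prob}([0,1]^d)$ is metrized by $\mathcal{W}$).

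First, exactly as in the Lipschitz case, I would exploit $\mucausal(G)=\muempirical(G)$ for $G\in\Phi^N_t$ and $\mucausal[g]=\mucausal[G]$ for $g\in G$ to write
\[
\int \mathcal{W}(\mu_{y_1,\dots,y_t},\mucausal[y_1,\dots,y_t])\,\mucausal(dy)
\leq \sum_{G\in\Phi^N_t}\muempirical(G)\sup_{g\in G}\mathcal{W}(\mu_g,\mucausal[G]),
\]
and split
\[
\mathcal{W}(\mu_g,\mucausal[G])\leq \mathcal{W}(\mu_g,\mu_G)+\mathcal{W}(\mu_G,\muempirical[G])+\mathcal{W}(\muempirical[G],\mucausal[G]).
\]
The push-forward argument from \eqref{eq:push.forward.wasserstein} still applies to the last term, yielding $\mathcal{W}(\muempirical[G],\mucausal[G])\leq C/N^r$, uniformly over $G$. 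The middle term is exactly what appears in the right-hand side of the statement.

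For the first term, which was handled by Lipschitz continuity in \eqref{eq:estimate.mug.muG}, I would argue as follows. The map $(x_1,\dots,x_t)\mapsto \mu_{x_1,\dots,x_t}\in(\mathrm{Prob}([0,1]^d),\mathcal{W})$ is continuous on the compact space $([0,1]^d)^t$, hence uniformly continuous. Given $\delta>0$, choose $\eta(\delta)>0$ so that $\mathcal{W}(\mu_z,\mu_{z'})\leq \delta/(2C)$ whenever $|z-z'|\leq \eta(\delta)$. Since $\sup_{G\in\Phi^N_t}\mathop{\mathrm{diam}}(G)\leq C/N^r$, we can pick $N_0(\delta)$ so that $C/N^r\leq \eta(\delta)$ and simultaneously $C/N^r\leq \delta/2$ for every $N\geq N_0(\delta)$. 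Then the same convexity computation as in \eqref{eq:estimate.mug.muG} gives $\sup_{g\in G}\mathcal{W}(\mu_g,\mu_G)\leq \delta/(2C)$ for every $G\in \Phi^N_t$ with $\mu(G)>0$, while for $G$ with $\mu(G)=0$ we have $\muempirical(G)=0$ almost surely and the corresponding summand vanishes.

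Collecting these three bounds and using that $\sum_{G\in\Phi_t^N}\muempirical(G)=1$ yields
\[
\int \mathcal{W}(\mu_{y_1,\dots,y_t},\mucausal[y_1,\dots,y_t])\,\mucausal(dy)
\leq \frac{\delta}{2}+\frac{C}{N^r}+C\sum_{G\in\Phi^N_t}\muempirical(G)\mathcal{W}(\mu_G,\muempirical[G])
\]
almost surely for all $N\geq N_0(\delta)$, which gives the desired inequality. The only real conceptual point—and the place where the continuous-kernel case genuinely differs from the Lipschitz one—is the passage from a quantitative rate $C/N^r$ to the qualitative $\delta$, justified by the uniform continuity argument above; this is also what forces the threshold $N_0(\delta)$ to depend on $\delta$ through the modulus of continuity of the kernels.
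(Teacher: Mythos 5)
Your proof is correct and follows essentially the same route as the paper: reuse the decomposition from Lemma~\ref{lem:integral.kernels.leq.averaged.kernel}(ii) verbatim and replace the Lipschitz bound $\mathcal{W}(\mu_g,\mu_G)\leq LC N^{-r}$ by the uniform-continuity bound $\mathcal{W}(\mu_g,\mu_G)\leq\delta$ valid once $\mathop{\mathrm{diam}}(G)\leq CN^{-r}$ falls below the modulus of continuity, which determines $N_0(\delta)$. Your only (minor and welcome) addition is that you explicitly absorb the residual push-forward term $C/N^{r}$ into the $\delta$ by enlarging $N_0(\delta)$, a point the paper leaves implicit.
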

\begin{proof}
	The proof follows exactly as in the proof of Lemma \ref{lem:integral.kernels.leq.averaged.kernel}; one only needs to replace the estimate `$\mathcal{W}(\mu_g, \mu_G)\leq CN^{-r}$ for all $g\in G$' (this is \eqref{eq:estimate.mug.muG} within that lemma) by the following:
	
	Let $\delta>0$.
	By uniform continuity of $(x_1,\dots,x_t) \mapsto \mu_{x_1,\dots,x_t}$, there exists $\varepsilon>0$ such that for every $x,y\in([0,1]^d)^t$ with $|x-y|\leq\varepsilon$, one has that $\mathcal{W}(\mu_x, \mu_y)\leq\delta$.
	Now note that for arbitrary $G\in\Phi^N_t$ and $g\in G$ it holds that
	\begin{align*} 
	\mathcal{W}(\mu_g, \mu_G)
	&\leq \frac{1}{\mu(G)} \int_{G}  \mathcal{W}(\mu_g, \mu_{z_1,\dots,z_t}) \,\mu(dz)
	\leq  \sup_{z\in G} \mathcal{W}(\mu_g, \mu_z) 
	\leq \delta,
	\end{align*}
	where the last inequality holds once $\mathop{\mathrm{diam}}(G) \leq \varepsilon$. 
	As $\mathop{\mathrm{diam}}(G) \leq C N^{-r}$ uniformly over $G\in\Phi_t^N$, this concludes the proof. % (with $N_0(\delta)= (\varepsilon/C)^{-1/r}$).
\end{proof}

\begin{lemma} 
\label{lem:a.s.convergence.continuous}
	Assume that the kernels of $\mu$ are continuous.
	Then $\mathcal{AW}(\mu,\mucausal) \to 0$ almost surely.
\end{lemma}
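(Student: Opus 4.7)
The plan is to combine the continuous-kernel bounds already at my disposal (Lemmas~\ref{lem:aw.estimate.continuous} and~\ref{lem:integral.kernels.leq.averaged.kernel.continuous}) with almost-sure convergence to zero of the two empirical ingredients
\[ W^N := \mathcal{W}(\mu_1,\muempirical[1]) \quad\text{and}\quad S_t^N := \sum_{G\in\Phi_t^N}\muempirical(G)\,\mathcal{W}(\mu_G,\muempirical[G]) \quad (1\le t\le T-1). \]
Granted these two convergences, a soft two-stage $\delta$-argument will close the lemma: fixing $\delta>0$, Lemma~\ref{lem:aw.estimate.continuous} dominates $\mathcal{AW}(\mu,\mucausal)$ by $\delta + C(\delta)\mathcal{W}(\mu_1,\mucausal[1]) + C(\delta)\sum_{t=1}^{T-1}\int\mathcal{W}(\mu_{y_1,\dots,y_t},\mucausal[y_1,\dots,y_t])\,\mucausal(dy)$; Lemma~\ref{lem:integral.kernels.leq.averaged.kernel}(i) controls the marginal term by $C/N^r+W^N$, while Lemma~\ref{lem:integral.kernels.leq.averaged.kernel.continuous}, applied with an auxiliary $\eta>0$, controls each kernel integral by $\eta+CS_t^N$ for $N$ large. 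Passing $N\to\infty$ yields $\limsup_N \mathcal{AW}(\mu,\mucausal)\le \delta + C(\delta)(T-1)\eta$ almost surely; sending $\eta\downarrow 0$ and finally $\delta\downarrow 0$ gives the claim.

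I would handle the two a.s.\ convergences in a parallel Borel--Cantelli fashion, leveraging the work of Sections~\ref{sec:proof.mean}--\ref{seq:proof.dev}. For $W^N$, Lemma~\ref{lem:conditional.rate.expectation}(i) gives $E[W^N]\le CR(N)\to 0$ and Lemma~\ref{lem:concentration.sum.mu.G}(a) furnishes the subgaussian tail $P[|W^N-E[W^N]|\ge\varepsilon]\le 2\exp(-cN\varepsilon^2)$; summability in $N$ together with Borel--Cantelli applied along a countable dense sequence of $\varepsilon$'s forces $W^N\to 0$ almost surely. For $S_t^N$ the same two-step pattern works conditionally on $\mathcal{G}_t^N$: the conditional mean is dominated deterministically by $CR(N/N^{rd(T-1)})\to 0$ by Lemma~\ref{lem:conditional.rate.expectation}(ii), and the conditional subgaussian bound of Lemma~\ref{lem:concentration.sum.mu.G}(b), once $\mathcal{G}_t^N$ is integrated out, yields the same exponential tail on $\Delta_t^N:=S_t^N-E[S_t^N\,|\,\mathcal{G}_t^N]$; Borel--Cantelli then gives $\Delta_t^N\to 0$ and hence $S_t^N\to 0$ almost surely.

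The one point to watch in carrying this out is the order of limits in the final $\delta$-argument: the constant $C(\delta)$ from Lemma~\ref{lem:aw.estimate.continuous} can blow up as $\delta\downarrow 0$, so the auxiliary $\eta$ fed into Lemma~\ref{lem:integral.kernels.leq.averaged.kernel.continuous} must be chosen depending on $\delta$ (for instance $\eta=\delta/(2C(\delta)(T-1))$) so that the assembled upper bound genuinely tends to $0$. Beyond this bookkeeping, no new idea is required---all the heavy lifting has already been done in the concentration results of Section~\ref{seq:proof.dev} and the conditional independence identification of Lemma~\ref{lem:ingredients.indep}.
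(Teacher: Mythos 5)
Your proposal is correct and uses the same ingredients as the paper (Lemmas~\ref{lem:aw.estimate.continuous}, \ref{lem:integral.kernels.leq.averaged.kernel.continuous}, \ref{lem:integral.kernels.leq.averaged.kernel}(i), \ref{lem:conditional.rate.expectation}, \ref{lem:concentration.sum.mu.G}, and Borel--Cantelli). The only difference is organizational: the paper aggregates all terms into one deviation bound on $\mathcal{AW}(\mu,\mucausal)$ and applies Borel--Cantelli once, whereas you apply Borel--Cantelli to each empirical ingredient $W^N$ and $\Delta_t^N$ separately and then assemble the a.s.\ limits; your caveat about $C(\delta)$ is handled even more simply by noting that, at fixed $\delta$, each $\limsup_N$ of the kernel-integral terms is already $0$ a.s., so $C(\delta)$ multiplies zero.
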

\begin{proof}
	The first part of the proof follows the proof of Theorem \ref{thm:deviation}:
	let $\delta>0$ be arbitrary.
	Then, substituting Lemma \ref{lem:aw.estimate.continuous} for Lemma \ref{lem:aw.estimate.lipschitz.kernel} and Lemma \ref{lem:integral.kernels.leq.averaged.kernel.continuous} for Lemma \ref{lem:integral.kernels.leq.averaged.kernel} in the proof of Theorem \ref{thm:deviation}, we conclude that there exist $C(\delta)$ and $N_0(\delta)$ such that
	\begin{align*}
	\mathcal{AW}(\mu,\mucausal)
%	&\leq \delta +C(\delta)\Big( \sum_{t=0}^{T-1} \Delta_t^N + \frac{1}{N^r} + S^N \Big)\\
	&\leq  \delta +C(\delta)\Big( \sum_{t=0}^{T-1} \Delta_t^N + \mathop{\mathrm{rate}}(N)\Big),
	\end{align*}
	almost surely  for all $N\geq N_0(\delta)$; compare with \eqref{eq:AW.smaller.three.terms}.
	Recall that $\Delta_t^N$ was defined in \eqref{eq:def.Delta.t} for $1\leq t\leq T-1$ and in \eqref{eq:def.Delta.0} for $t=0$.
	
	An application of Lemma \ref{lem:concentration.sum.mu.G} then shows that, similar to before, 
	\begin{align*}
	P\Big[ \mathcal{AW}(\mu,\mucausal)  \geq \delta + \varepsilon \Big]
	&\leq P\Big[  C(\delta) \sum_{t=0}^{T-1} \Delta_t^N \geq \varepsilon- C(\delta) \mathop{\mathrm{rate}}(N)\Big] \\
	&\leq 2T \exp\Big( -c N \big( \frac{\varepsilon}{C(\delta)} - \mathop{\mathrm{rate}}(N)\big)^2_+ \Big)
	\end{align*}	
	for all $N\geq N_0(\delta)$, where $c>0$ is some small constant.
	
	Let $N_1(\delta,\varepsilon)$ such that $\mathop{\mathrm{rate}}(N) \leq \varepsilon/(2C(\delta))$ for all $N\geq N_1(\delta,\varepsilon)$.
	Then 
	\begin{align*}
	P\Big[ \mathcal{AW}(\mu,\mucausal) \geq \delta +\varepsilon\Big]
	&\leq 2T \exp\Big( \frac{-c N\varepsilon^2}{4} \Big)
	\end{align*}
	for all $N\geq \max\{N_0,N_1\}$.
	By a Borel-Cantelli argument, this implies that 
	\[ P\Big[ \limsup_{N\to\infty}\mathcal{AW}(\mu,\mucausal) \geq \delta +\varepsilon\Big]=0.\]
	As $\varepsilon,\delta>0$ were arbitrary, we conclude that $\mathcal{AW}(\mu,\mucausal)$ converges to zero almost surely when $N\to\infty$.
	This completes the proof.
\end{proof}

With this preparatory work carried out, we are now ready to prove the strong consistency of $\mucausal$.

\begin{proof}[Proof of Theorem \ref{thm:almost.sure.convergence}]
	We provide the proof for a two-period setting, that is, $T=2$.
	The general case follows by the same arguments, however it involves a (lengthy) backward induction just as in the proof of Lemma \ref{lem:aw.estimate.lipschitz.kernel} and offers no new insights.
	
	Let $\varepsilon>0$.
	We shall construct $\nu\in\mathrm{Prob}(([0,1]^d)^2)$ with continuous conditional probabilities such that $\mathcal{AW}(\mu,\nu)\leq\varepsilon$ and $\limsup_{N}\mathcal{AW}(\mucausal,\nucausal)\leq\varepsilon$ almost surely.
	As $\lim_N\mathcal{AW}(\nu,\nucausal)=0$ almost surely by Lemma \ref{lem:a.s.convergence.continuous}, the triangle inequality then implies that $\limsup_N \mathcal{AW}(\mu,\mucausal)\leq 2\varepsilon$ almost surely.
	Recalling that $\varepsilon>0$ was arbitrary completes the proof.
	\begin{enumerate}[(a)]
	\item
	By Lusin's theorem there is a compact set $K\subset[0,1]^d$ such that $\mu(K)\geq 1-\varepsilon$ and $K\ni x_1\mapsto \mu_{x_1}$ is continuous.
	Extend the latter mapping to a continuous mapping $[0,1]^d\ni x_1\mapsto \nu_{x_1}$ by Tietze's extension theorem (actually, a generalization thereof to vector valued functions: Dugundji's theorem \cite[Theorem 4.1]{dugundji1951extension}) and define
	\[\nu(dx_1,dx_2):=\mu_1(dx_1)\nu_{x_1}(dx_2)\in\mathrm{Prob}(([0,1]^d)^2).\]
	Then, taking the identity coupling $\gamma\in\mathrm{Cpl}(\mu_1,\nu_1)$ (that is, $\gamma=[x_1\mapsto (x_1,x_1)]_\ast \mu_1$) implies that $\mathcal{AW}(\mu,\nu)\leq \int \mathcal{W}(\mu_{x_1},\nu_{x_1})\,\mu_1(dx_1)\leq \varepsilon$.
	\item
	It remains to construct an i.i.d.\ sample of $\nu$ such that  $\limsup_{N}\mathcal{AW}(\mucausal,\nucausal)\leq\varepsilon$.
	To that end recall that $(X^n)_n$ is an i.i.d.\ sample of $\mu$, and define 
	\[Y_1^n:=X_1^n 
	\quad\text{and}\quad
	Y_2^n:=X_2^n 1_{X_1^n \in K} + Z_2^n 1_{X_1^n \notin K}\]
	for every $n$, 	where $Z_2^n$ satisfies that $P[Z_2^n\in \cdot |X_1^n]=\nu_{X_1^n}(\cdot)$ (and $Z^n$ is independent of $\{X^m,Y^m,Z^m : n\neq m\}$).
	Note that $(Y_1^n,Y^n_2)_n$ is an i.i.d.\ sample of $\nu$.
	
	We again take the identity coupling between $\mucausal_1=\nucausal_1$ to obtain
	\begin{align}
	\label{eq:AW.estimate.noncontinuous.kernel}
	 \mathcal{AW}(\mucausal,\nucausal)
	\leq \int \mathcal{W}(\mucausal_{x_1},\nucausal_{x_1})\,\mucausal_1(dx_1)
	=\sum_{G\in\Phi^N_1} \muempirical(G) \mathcal{W}(\mucausal_G,\nucausal_G).
	\end{align}
	In the (second) equality we also used that $\mucausal(G)=\muempirical(G)$ for every $G\in\Phi^N_1$ and that the kernels of $\mucausal$ and $\nucausal$ are constant on every $G\in\Phi_1^N$; in fact
	\begin{align*}
	\mucausal_G&= \frac{1}{N\muempirical(G)} \sum_{n\leq N \text{ s.t.\ } X_1^n\in G} \delta_{\varphi^N(X_2^n)},\\
	\nucausal_G&=\frac{1}{N\muempirical(G)}\sum_{n\leq N \text{ s.t.\ } X_1^n\in G} \delta_{\varphi^N(Y_2^n)}.
	\end{align*}	
	Therefore, making use of convexity of $\alpha,\beta\mapsto \mathcal{W}(\alpha,\beta)$, we further estimate
	\begin{align*}
	\mathcal{W}(\mucausal_G,\nucausal_{G})
	&\leq \frac{1}{ N\muempirical(G)} \sum_{n \leq N \text{ s.t.\ } X_1^n\in G} \mathcal{W}(\delta_{\varphi^N(X_2^n)}, \delta_{\varphi^N(Y_2^n)} ) \\
	&\leq \frac{N\muempirical(G\cap K^c)}{ N\muempirical(G)}\,\text{diam}([0,1]^d),
	\end{align*}
	where we used that $Y_2^n=X_2^n$ whenever $X_1^n\in K$.
	Plugging this estimate into \eqref{eq:AW.estimate.noncontinuous.kernel} yields $\mathcal{AW}(\mucausal, \nucausal) \leq \muempirical(K^c)$.
	To conclude use the strong law of large numbers which guarantees that $\lim_N \muempirical(K^c)=\mu(K^c)\leq \varepsilon\,\text{diam}([0,1]^d)$ almost surely, where the last inequality holds by choice of $K$.
%	This shows that indeed $\limsup_N  \mathcal{W}(\mucausal,\nucausal)\leq\varepsilon$.
	\qedhere
	\end{enumerate}
\end{proof}

\section{Modified estimator for Markov-processes}
\label{sec:Markov}

In this section we take up Remark \ref{rem:Markov} and we show that under additional structural assumption of $\mu$, it is possible to come up with modified estimators which have improved statistical properties\footnote{The authors would like to thank the anonymous referee for suggesting to investigate this question.}.
We shall do so in the (arguably) most relevant case that $\mu$ is known to be Markov (meaning that $\mu_{x_1,\dots,x_t}$ depends only on $x_t$ and we use the shorthand  notation $\mu_{t,x_t}$ for the latter). 

Set $r:=1/3$ for $d=1$ as well as $r:=1/2d$ for $d\geq 2$ and recall $\varphi^N$ from Definition \ref{def:adapted.empirical.measure}.
Define $\mucausal$ via the formula 
\[ \mucausal := \mucausal_1(dx_1) \, \mucausal_{1,x_1}(dx_2)\,\cdots\,\mucausal_{T-1,x_{T-1}} (dx_T)\]
where, for every $1\leq t\leq T-1$ and $x\in[0,1]^d$ we set
\begin{align*}
\mucausal_1
 &:= \frac{1}{N } 
	\sum_{ n\in\{1,\dots,N\} }  \delta_{\varphi^N(X^n_{1})} \\
\mucausal_{t,x}
&:=
	\frac{1}{\big| \substack{n\in\{1,\dots,N\} \text{ s.t.}\\  \varphi^N(X_t^n)= \varphi^N(x)} \big| } 
	\sum_{ \substack{n\in\{1,\dots,N\} \text{ s.t.}\\  \varphi^N(X_t^n)= \varphi^N(x)}}  \delta_{\varphi^N(X^n_{t+1})} .
\end{align*}
Then, by definition, $\mucausal$ is Markovian too.

\begin{theorem}[Markov]
\label{thm:rates.Markov}
	Assume that $\mu$ is Markov and that it satisfies Assumption \ref{ass:lipschitz.kernel}.
	Then there are two constants $C,c>0$ such that 
	\begin{align*}
	E\Big[ \mathcal{AW}(\mu,\mucausal)\Big]
	&\leq C \mathop{\mathrm{rate}}(N)
	:= C
	\begin{cases}
	N^{-1/3} &\text{for } d=1,\\
	N^{-1/4}\log(N) &\text{for } d=2,\\
	N^{-1/2d} &\text{for } d \geq 3,
	\end{cases}
	\end{align*}
	and 
	\begin{align*}
	P\Big[ \mathcal{AW}(\mu,\mucausal)  \geq C \mathop{\mathrm{rate}}(N)+\varepsilon \Big]
	&\leq 2T\exp\Big( -cN\varepsilon^2 \Big)
	\end{align*}
	for all $N\geq 1$.
\end{theorem}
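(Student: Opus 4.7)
The plan is to adapt the proofs of Theorems \ref{thm:rates.unit.cube} and \ref{thm:deviation} to the Markov setting. The crucial simplification is that for a Markov $\mu$ the kernels $\mu_{t,\cdot}$ live on $[0,1]^d$ rather than on the full history space $([0,1]^d)^t$, so a single partition $\Phi^N$ of $[0,1]^d$ into $N^{rd}$ cubes suffices to estimate the kernel at every time; accordingly $r$ may be chosen as large as $1/3$ for $d=1$ and $1/(2d)$ for $d\geq 2$.

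First, since $\mu$ is Markov by assumption and $\mucausal$ is Markov by construction, the proof of Lemma \ref{lem:aw.estimate.lipschitz.kernel} together with the substitutions $\mu_{y_1,\dots,y_t}=\mu_{t,y_t}$ and $\widehat{\boldsymbol{\mu}}^N_{y_1,\dots,y_t}=\widehat{\boldsymbol{\mu}}^N_{t,y_t}$ gives
\[
\mathcal{AW}(\mu,\mucausal)\leq C\,\mathcal{W}(\mu_1,\widehat{\boldsymbol{\mu}}^N_1)+C\sum_{t=1}^{T-1}\int \mathcal{W}\bigl(\mu_{t,y_t},\widehat{\boldsymbol{\mu}}^N_{t,y_t}\bigr)\,\mucausal(dy).
\]
Next, for each $1\leq t\leq T-1$ and $F\in\Phi^N$ I would introduce the averaged kernel $\mu_F^{(t)}:=\mu_t(F)^{-1}\int_F \mu_{t,x}\,\mu_t(dx)$ (with $\mu_t$ the $t$-th marginal of $\mu$) and the partial empirical kernel $\widehat{\mu}^{(t,F)}:=|I_{t,F}|^{-1}\sum_{n\in I_{t,F}}\delta_{X_{t+1}^n}$, where $I_{t,F}:=\{n:X_t^n\in F\}$ (with the usual convention when $|I_{t,F}|=0$). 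Following the argument of Lemma \ref{lem:integral.kernels.leq.averaged.kernel} verbatim, using that $\widehat{\boldsymbol{\mu}}^N_{t,x}$ depends on $x$ only through $\varphi^N(x)$ and that $x\mapsto\mu_{t,x}$ is Lipschitz, the $t$-th integral above is bounded by
\[
\frac{C}{N^r}+\sum_{F\in\Phi^N}\frac{|I_{t,F}|}{N}\,\mathcal{W}\bigl(\mu_F^{(t)},\widehat{\mu}^{(t,F)}\bigr).
\]

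The main obstacle is a Markov version of Lemma \ref{lem:ingredients.indep}: conditionally on the counts $(|I_{t,F}|)_{F\in\Phi^N}$, the family $\{\widehat{\mu}^{(t,F)}:F\in\Phi^N\}$ is independent and each $\widehat{\mu}^{(t,F)}$ has the distribution of the empirical measure of $\mu_F^{(t)}$ with sample size $|I_{t,F}|$. The argument should mirror that of Lemma \ref{lem:ingredients.indep}: after conditioning on a specific partition $(I_{t,F})_{F}$ of $\{1,\dots,N\}$ realizing the given counts, the Markov property ensures that given $X_t^n\in F$, the variable $X_{t+1}^n$ is an i.i.d.\ draw from $\mu_F^{(t)}$, and these draws are independent across different bins because they come from i.i.d.\ paths. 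This is the only place where the Markov hypothesis on $\mu$ is genuinely used and it is what allows replacing $\Phi_t^N\subset ([0,1]^d)^t$ by the much coarser $\Phi^N\subset [0,1]^d$.

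Once this conditional-independence lemma is in place, both the expectation and concentration bounds will follow exactly as in Sections \ref{sec:proof.mean} and \ref{seq:proof.dev}. The analog of Lemma \ref{lem:conditional.rate.expectation}, combining \cite[Theorem 1]{fournier2015rate} with concavity of $u\mapsto uR(u)$, gives $E\bigl[\sum_F (|I_{t,F}|/N)\,\mathcal{W}(\mu_F^{(t)},\widehat{\mu}^{(t,F)})\bigr]\leq C\,R(N/|\Phi^N|)\leq C\,R(N^{1-rd})$. Balancing $N^{-r}$ against $R(N^{1-rd})$ with the prescribed choice of $r$ produces exactly $\mathop{\mathrm{rate}}(N)$ as claimed. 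For the concentration bound, McDiarmid's inequality (as in Lemma \ref{lem:ingredients.subgauss}) shows that each of the $T$ contributions is, conditionally on the bin counts, subgaussian with parameter $C/N$, and a union bound over $t\in\{0,1,\dots,T-1\}$ yields the $2T\exp(-cN\varepsilon^2)$ tail.
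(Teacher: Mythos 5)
Your proposal is correct and follows essentially the same route as the paper. The paper streamlines the write-up by redefining $\Phi_t^N := \{([0,1]^d)^{t-1}\times F : F\in\Phi^N\}$ as a family of cylinder sets, so that Lemmas \ref{lem:integral.kernels.leq.averaged.kernel}, \ref{lem:ingredients.indep}, and \ref{lem:conditional.rate.expectation} apply verbatim with this new $\Phi_t^N$, rather than re-deriving their content with fresh notation $\mu_F^{(t)},\widehat{\mu}^{(t,F)}$ as you do; the substance is the same. One small mis-attribution worth noting: the conditional-independence lemma does not actually need the Markov hypothesis---Lemma \ref{lem:ingredients.indep} holds for any $\mu$ and any measurable partition of $([0,1]^d)^t$, cylinders included. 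Where Markovianity is genuinely used is in the analogue of Lemma \ref{lem:integral.kernels.leq.averaged.kernel} (the step you state more casually): it guarantees that for $g$ in the cylinder $G=([0,1]^d)^{t-1}\times F$ the kernel $\mu_g=\mu_{t,g_t}$ is within $LC N^{-r}$ of the cylinder-averaged $\mu_G=\mu_F^{(t)}$; without Markov, $\mu_g$ depends on the full history $g_{1:t-1}$, which a cylinder does not control, and no such uniform bound would hold.
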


In the theorem above, the constants $C,c$ depend on $d$, $T$, and the Lipschitz-constants in Assumption \ref{ass:lipschitz.kernel}. 

\begin{remark}
	It is important to stress that for $T>2$, the rate obtained in Theorem \ref{thm:rates.Markov} improves the rates in Theorem \ref{thm:rates.unit.cube} and Theorem \ref{thm:deviation}.
	In particular, the dependence of the rate on $T$ disappears in Theorem \ref{thm:rates.Markov}.% suggesting that an extension to a continuous-time setting could be possible.
\end{remark}

\begin{proof}[Proof of Theorem \ref{thm:rates.Markov}]
	The proof follows the same lines as the proof for Theorem \ref{thm:rates.unit.cube} and \ref{thm:deviation}.
	The (heuristic) reason for the improved rate is the following: 
	in order to estimate the kernels, there is now no need to partition the whole state space of the past into small cubes but only the last step, which yields a larger number of samples that can be used to estimate the kernels.
	
	We shall only sketch the required modifications, the proofs are essentially the same.
	
	Lemma \ref{lem:aw.estimate.lipschitz.kernel} remains unchanged, noting that Markovianity of $\mu$ and $\mucausal$ implies
	\[ \mathcal{AW}(\mu,\mucausal)\leq C\mathcal{W}(\mu_1,\mucausal_1) + C\sum_{t=1}^{T-1} \int \mathcal{W}(\mu_{t,y_t},\mucausal_{t,y_t}) \,\mucausal(dy). \]
	Further, setting 
	\[ \Phi_t^N:=\Big\{ ([0,1]^d)^{t-1} \times F : F\in\Phi^N \Big\}, \]
	the statements made in Lemma \ref{lem:integral.kernels.leq.averaged.kernel} and Lemma \ref{lem:ingredients.indep} remain the same.
	In Lemma \ref{lem:conditional.rate.expectation}, we obtain $E[\mathcal{W}(\mu_1,\mu_1^N)]\leq CR(N)$ (with the same proof) and
	\[ E\Big[\sum_{G\in \Phi^N_t} \muempirical(G) \mathcal{W}(\mu_G,\muempirical[G])\Big| \mathcal{G}_t^N \Big]
	\leq C R\Big( \frac{N}{ N^{rd}} \Big) .\]
	Indeed, the only change in the proof is that now $|\Phi^N_t|=N^{rd}$ independently of $t$ (instead of $|\Phi^N_t|=N^{trd}$).
	
	Combining Lemma \ref{lem:ingredients.indep} and Lemma \ref{lem:conditional.rate.expectation} then shows that
	\begin{align*}
	E[ \mathcal{AW}(\mu,\mucausal) ]
	\leq C\Big( \frac{1}{N^r} + R\Big( \frac{N}{N^{rd}} \Big) \Big)
	\leq C \mathop{\mathrm{rate}}(N),
	\end{align*}
	where the last equality holds by definition of $R$ and as $N/N^{rd}=N^{\frac{2}{3}}$ for $d=1$ and $N/N^{rd}=N^{\frac{1}{2}}$ for $d\geq 2$.
	This proves the statement pertaining the average rate of convergence.
	The proof for the statement pertaining deviation from the average speed does not require any changes.
\end{proof}

\section{Auxiliary results}
\label{sec:aux}

We start by providing a simple example showing that optimal stopping evaluated at the empirical measure does not converge to the value of the problem under the true model. This was first observed in \cite[Proposition 1]{PfPi16} by Pflug and Pichler.

\begin{example}
\label{ex:opt.stop.not.cont.usual.empirical}
	Consider a Gaussian random walk in two periods, that is, $X_0=0$, $X_1$ and $X_2-X_1$ have standard normal distribution and $X_2-X_1$ is independent of $X_1$. Denote by $\mu$ the law of this random walk, i.e.\ $\mu=\text{Law}(X_0,X_1,X_2)$. 	A classical optimal stopping problem consists of minimizing the expected cost $\int c(\tau,\cdot)\,d\mu$ over all stopping times $\tau\colon\mathbb{R}^3\to\{0,1,2\}$ (here stopping times simply means that $1_{\tau=0}$ is a function of $x_0$ and $1_{\tau=1}$ is a function of $x_0,x_1$ only), where $c\colon\{0,1,2\}\times\mathbb{R}^3\to\mathbb{R}$ is a given cost function.
	
	Now consider the same problem under the empirical measure $\widehat{\mu}^N$ in place of $\mu$ and take for instance the cost function $c(t,x):=x_t$.
	As $X_1$ has Lebesgue density, it follows that $P[X_1^n=X_1^m \text{ for some } n\neq m]=0$ which means that, almost surely, the knowledge of $X_1^n$ gives perfect knowledge of $X_2^n$.
	In particular, for every $N\geq 1$ and almost all $\omega$, the mapping
	\[ \tau^{N,\omega}(x_1):=\begin{cases}
	1 &\text{if } x_1=X_1^n(\omega)\text{ for some } n\leq N \text{ with } X_1^n(\omega)<X_2^n(\omega) \\
	2 &\text{else}
	\end{cases} \]
	defines a stopping time.
	Making use of the strong law of large numbers, we then obtain
	\[ \inf_\tau \int c(\tau,\cdot) \, d\widehat{\mu}^N
	\leq \int c(\tau^N,\cdot) \, d\widehat{\mu}^N
	= \int x_1\wedge x_2\,\widehat{\mu}^N(dx)
	\to\int x_1\wedge x_2\,\mu(dx)
	<0\]
	almost surely.
	This shows that any reasonable type of convergence (almost sure, in probability,...) towards $\inf_\tau \int c(\tau,\cdot)\,d\mu=0$ fails.
	
	The Gaussian framework was chosen for notational convenience, the same result of course applies to absolutely continuous probabilities on the unit cube as well.
\end{example}

We now provide the following proof.

\begin{proof}[Proof of Example \ref{ex:lipschitz.kernel}]
	\hfill
	\begin{enumerate}[(a)]
	\item
	Fix $1\leq t\leq T-1$ and let $(x_1,\dots,x_t)$, $(\tilde{x}_1,\dots,\tilde{x}_t)$ be two elements of $([0,1]^d)^t$.
	Define $\gamma\in \mathrm{Cpl}(\mu_{x_1,\dots,x_t},\mu_{\tilde{x}_1,\dots,\tilde{x}_t})$ by
	\[\gamma(A) := P\big[ \big( F_{t+1}(x_1,\dots,x_t,\varepsilon_{t+1}), F_{t+1}(\tilde{x}_1,\dots,\tilde{x}_t,\varepsilon_{t+1}) \big) \in A \big] \]
	for Borel $A\subset[0,1]^d\times[0,1]^d$.
	Then the assumption made on $F_{t+1}$ yields 
	\begin{align*}
	\mathcal{W}(\mu_{x_1,\dots,x_t},\mu_{\tilde{x_1},\dots,\tilde{x_t}})
	&\leq \int |a-b|\,\gamma(da,db)\\
 	&\leq L|(x_1,\dots,x_t) - (\tilde{x}_1,\dots,\tilde{x}_t)|,
	\end{align*}
	showing that Assumption \ref{ass:lipschitz.kernel} is indeed satisfied.
	\item
	Again fix $1\leq t\leq T-1$ and let $(x_1,\dots,x_t)$, $(\tilde{x}_1,\dots,\tilde{x}_t)$ be two elements of $([0,1]^d)^t$.
	Then $\mu_{x_1,\dots,x_t}$ has the density 
	\[ f_{X_{t+1}|X_1,\dots,X_t}(\cdot):=\frac{ f_{X_1,\dots,X_{t+1}}(x_1,\dots,x_t,\cdot)}{f_{X_1,\dots,X_t}(x_1,\dots,x_t)} \]
	w.r.t.\ the Lebesgue measure on $[0,1]^d$, where $f_{X_1,\dots,X_t}$ denotes the density of the distribution of $(X_1,\dots,X_t)$; similarly for $f_{X_1,\dots,X_{t+1}}$.
	The same goes for $\mu_{\tilde{x}_1,\dots,\tilde{x}_t}$ if $x_s$ is replaced by $\tilde{x}_s$ everywhere.
	Moreover, it is not hard to show that $\mathcal{W}(gdx,hdx)\leq \sqrt{d} \int |g(x)-h(x)|\,dx$ whenever $g$ and $h$ are two Lebesgue-densities on $[0,1]^d$; use e.g.\ the Kantorovich-Rubinstein duality and H\"older's inequality or apply \cite[Theorem 6.13]{villani2008optimal}.
	Therefore one has that
	\begin{align*}
	&\mathcal{W}(\mu_{x_1,\dots,x_t},\mu_{\tilde{x_1},\dots,\tilde{x}_t}) \\
	&\leq \sqrt{d} \int \Big|\frac{ f_{X_1,\dots,X_{t+1}}(x_1,\dots,x_t,u)}{f_{X_1,\dots,X_t}(x_1,\dots,x_t)} -\frac{ f_{X_1,\dots,X_{t+1}}(\tilde{x}_1,\dots,\tilde{x}_t,u)}{f_{X_1,\dots,X_t}(\tilde{x}_1,\dots,\tilde{x}_t)} \Big| \,du.
	\end{align*}
	A quick computation using the assumptions imposed on $f$ shows that the latter can be bounded by $\sqrt{d}2L/\delta$, which completes the proof.
	\item
	In the case that $\mu$ is supported on finitely many points, the disintegration is uniquely defined by its value on these points.
	In particular, any Lipschitz continuous extension of this mapping will do, see e.g.\ \cite{johnson1986extensions}.
	\qedhere
	\end{enumerate}
\end{proof}

\vspace{1em}
\noindent
\textsc{Acknowledgments:} 
%\vspace{0.2em}
%
%\noindent
Daniel Bartl is grateful for financial support through the Vienna Science and Technology Fund (WWTF) project MA16-021 and the Austrian Science Fund (FWF) project P28661.\\
Mathias Beiglb\"ock is grateful for financial support through the Austrian Science Fund (FWF) under project Y782.\\
Johannes Wiesel acknowledges support by the German National Academic Foundation.

\bibliographystyle{abbrv}
\bibliography{joint_biblio}

\end{document}